\def\done{{1\hskip-2.5pt{\rm l}}}
\newtheorem{thm}{Theorem}
\newtheorem{claim}{Claim}[section]
\newtheorem{lem}{Lemma}[section]
\newtheorem{rmk}{Remark}[section]
\newtheorem{prop}{Proposition}[section]
\newtheorem{note}{Notation}
\newtheorem{obs}{Observation}
\newtheorem{defin}{Definition}[section]
\newtheorem{cor}{Corollary}[section]
\newcommand {\ds}{\displaystyle}
\newcommand{\norm}[1]{\left\Vert#1\right\Vert}
\newcommand{\R}{\mathbb R}
\newcommand{\C}{\mathbb C}
\newcommand{\N}{\mathbb N}
\newcommand{\E}{\mathbb E}
\newcommand{\Pro}{\mathbb P}
\def\ep{\epsilon}
\title[Zeroes of Translation-Invariant GAFs]{Zeroes of Gaussian Analytic Functions with Translation-Invariant Distribution}
\author{Naomi Feldheim}
\thanks{School of Mathematical Sciences,
Tel Aviv University, Tel Aviv, 69978, Israel.
Research supported by the Science Foundation of the Israel Academy
of Sciences and Humanities, grant 171/07. }
\begin{document}
\subjclass[2000]{91A43,91A46} \maketitle

\begin{abstract}
We study zeroes of Gaussian analytic functions in a strip in the complex
plane, with translation-invariant distribution. We prove that the
horizontal limiting measure of the zeroes exists almost surely, and that it
is non-random if and only if the spectral measure is continuous (or
degenerate). In this case, the limiting measure is computed in terms of the
spectral measure. We compare the behavior with Gaussian analytic function
with symmetry around the real axis. These results extend a work by Norbert
Wiener.
\end{abstract}

 {\bf Keywords:} zeroes of Gaussian analytic functions, translation-invariance, ergodicity.

\section{Introduction}
Following Wiener, we study zeroes of Gaussian analytic functions in a strip
in the complex plane, with translation-invariant distribution. Under certain
assumptions on the spectral measure, Wiener proved that the zeroes obey the
law of large numbers, and computed their horizontal density (limiting
measure). This result appears in his classical treatise with Paley
\cite[chapter X]{PW}. Wiener's proof is quite intricate; this may be why it
attracted little attention.

In this work, we simplify Wiener's arguments and remove unnecessary
assumptions on the spectral measure. We incorporate the result into a theorem
that guarantees the existence of the horizontal limiting measure in question, and
asserts it is not random if and only if the spectral measure is continuous or
consists of a single atom. Then we prove a counterpart of this theorem for a
natural class of Gaussian analytic functions which have a symmetry with
respect to the real axis.

For this purpose, we developed a general Edelman-Kostlan-type formula for
computing the average zero-counting measure of zeroes of a symmetric Gaussian
analytic function in some domain (see theorem \ref{thm-Rdensity} below). This
result extends those of Shepp and Vanderbei \cite{S&V}, Prosen \cite{Pros}
and Macdonald \cite{Mac}.

\emph{Acknowledgements:} This work was done during and following my master
thesis in Tel-Aviv university, under the attentive guidance of Mikhail Sodin.
I would like to thank him for his great devotion and fruitful influence on me
and the project. I also thank Boris Tsirelson, for teaching me most of the
probability I know in a most patient and inspiring way, and for contributing
the idea and proof in appendix \ref{app all segs}.

\subsection{ Gaussian analytic functions}\label{secsub GAF}


We will deal with two classes of random Gaussian analytic functions.

\begin{defin}\label{def1}
Let $D\subset \mathbb C$ be a domain, and let $\{\phi_n\}_{n\in\mathbb N}$ be
analytic functions in $D$ such that the series $\sum_n |\phi_n(z)|^2$
converges uniformly on compact subsets of $D$.

\begin{enumerate}
\item Let $a_n$ be independent standard complex Gaussian random variables
    ($a_n\sim \mathcal N_{\mathbb C}(0,1)$). The random series $\sum_n
    a_n \phi_n(z)$ is called a Gaussian Analytic Function (GAF, for
    short).

\item Let $b_n$ be independent standard \emph{real} Gaussian variables
    ($b_n\sim \mathcal N_{\mathbb R}(0,1)$). If the domain $D$ and the
    functions $\phi_n$ are symmetric w.r.t. the real axis (the latter
    means that $\phi(\overline z)=\overline{\phi(z)}$, $z\in D$) then the
    random series $\sum_n b_n \phi_n(z)$ is called a symmetric Gaussian
    Analytic Function.
\end{enumerate}
\end{defin}

Our assumptions on $\{\phi_n\}$ ensure that the sums above a.s. converge to an
analytic function in $D$ \cite[Chapter~2]{GAF book}. Throughout the paper we
assume that there is no $z_0\in D$ such that $\phi_n(z_0)=0$ for all $n\in\N$
(hence the function $f$ has no deterministic zeroes).

\smallskip
The covariance kernel of $f(z)$ is defined by
\begin{equation}\label{ker-base2}
 K(z,w)=\mathbb{E}(f(z)\overline{f(w)})=\sum_n \phi_n(z)\overline{\phi_n(w)}\,.
\end{equation}

The function $K(z,w)$ is positive definite, analytic in $z$, anti-analytic in
$w$, and obeys the law $K(z,w)=\overline{K(w,z)}$. It turns out that every
such function $K(z,w)$ of two variables $z,w\in D$ uniquely defines a GAF in
$D$.

If in addition $K(x,y)$ is real whenever $x,y\in D\cap\R$, then $K(z,w)$ also
uniquely defines a symmetric GAF with this kernel. We stress that a GAF and a
symmetric GAF with the same kernel are different random processes.

\subsection{Stationarity}\label{secsub stat}
We assume our domain is the $\Delta$-strip $D=D_\Delta
=\{|\text{Im}\:z|\!<\!\Delta\}$ with $0<\Delta\leq\infty$.
\begin{defin}
A GAF or symmetric GAF in a strip $D_\Delta$ is called {\em stationary}, if
it is distribution-invariant with respect
 to all horizontal shifts,
i.e., for any $t\in\mathbb R$, any $n\in\mathbb N$, and any $z_1,\dots,z_n\in
D$, the random $n$-tuples
\[
\bigl( f(z_1),\dots,f(z_n) \bigr) \qquad {\rm and} \qquad \bigl(
f(z_1+t),\dots,f(z_n+t) \bigr)
\]
have the same distribution.
\end{defin}

If $f(z)$ is stationary in the $\Delta$-strip, then for any $x,y\in\R$ the
covariance $\E (f(x)\overline{f(y)})$ depends on $(x-y)$ only, so that
$K(x,y) = r(x-y)$ for some real-analytic function $r:\R\to\C$. From this we
deduce $K(z,w)=r(z-\bar w )$ (both functions are analytic in $z$,
anti-analytic in $w$, and coincide for $z,w\in\R$), and so $r(t)$ has an
analytic continuation to the $2\Delta$-strip $D_{2\Delta}$.

Since $r(t)$ is continuous and positive-definite, it is the Fourier transform
of a positive measure $\rho$ (Bochner's Theorem):
$$r(t) = \int_\R e^{2\pi i t \lambda}d\rho(\lambda).$$
The measure $\rho$ is called the spectral measure of the process $f(z)$.

Since $r(t)$ has an analytical extension to the $2\Delta$-strip,
$\rho(\lambda)$ has a finite exponential moment \cite[Chapter 2]{LO}:

\begin{equation}\label{cond}
\text{for each }\Delta_1<\Delta,\:\: \int_{-\infty}^{\infty} e^{2\pi
\cdot 2\Delta_1 |\lambda|} d\rho(\lambda) < \infty\,.
\end{equation}

In fact, condition \eqref{cond} is also sufficient for $r(t)$ to have an
analytic extension to the $2\Delta$-strip. Therefore, beginning with a finite
positive measure $\rho$ obeying \eqref{cond}, we can construct a kernel by
\begin{equation}\label{eq K-rho}
K(z,w) = \int_\R e^{2\pi i (z-\bar w)\lambda} d\rho(\lambda).
\end{equation}
which defines in its turn a stationary GAF in the $\Delta$-strip.

What measures could be spectral measures of a symmetric GAF? As we mentioned
earlier, a kernel $K(z,w)$ defines a symmetric GAF if and only if it is real
for $z,w\in\R$; By relation \eqref{eq K-rho} this is equivalent to $\rho$
being symmetric with respect to the origin.

Finally, we mention that a random GAF or symmetric GAF may be explicitly
constructed, as follows, from its spectral measure $\rho$. If
$\{\psi_n(z)\}_n$ comprise an orthonormal basis in $L^2_\rho(\R)$, then their
Fourier transforms
\[\phi_n(z)=\widehat {\psi_n}(z) =
\ds \int_\R e^{2\pi i z \lambda }\psi_n(\lambda) d\rho(\lambda)
\]
comprise a basis in the Hilbert space $\mathcal F \{L^2_\rho(\R)\}$ (the
Fourier image of $L^2_\rho(\mathbb R)$ with the scalar product transferred
from $L^2_\rho(\mathbb R)$). Then we have
\[
r(z-\overline w)=\mathbb{E}(f(z)\overline{f(w)})=\sum_n
\phi_n(z)\overline{\phi_n(w)}\,.
\]
Therefore, when used in definition~\ref{def1}, the basis $\bigl\{ \phi_n
\bigr\}$ will give us a random function with the desired kernel.

\section{Results and Discussion}\label{sec res}

\subsection{Main Theorem} It will be convenient to introduce some notation:

\begin{note}
\emph{(zero-set, zero-counting measure)} {\rm  Let $D\subset\mathbb{C}$ be a
region, and $f$ a holomorphic function in $D$. Denote the zero-set of $f$
(counted with multiplicities) by $ Z_f $, and the zero-counting measure by
$n_f$, i.e.,
\[
\forall \phi \in C_0^\infty(D), \qquad \int_D \phi(z) dn_f(z) =
\sum_{z\in Z_f} \phi(z)\,.
\]
We use the abbreviation $\displaystyle n_f(B)=\int_B dn_f(z)$ for the number
of zeroes in a Borel subset $B\subset D$.}

\end{note}

\begin{note}\label{note funcs}
{\rm Let $y\in(-\Delta,\Delta)$. For a stationary GAF or
symmetric-GAF in $D_\Delta$ with kernel $K(z,w)$, denote
\[
\psi(y) =K(iy,iy) = \displaystyle\int_{-\infty}^\infty e^{-4\pi
y\lambda}d\rho(\lambda)\,.
\]
In the case of a GAF, define the function
\begin{equation} \label{eq L}
L(y) = \frac{d}{dy}
\left(\frac{\psi'(y)}{4\pi\psi(y)}\right)
=-\frac{d}{dy} \left(\frac{\int_{-\infty}^\infty \lambda
e^{-4\pi y\lambda}d\rho(\lambda)} {\int_{-\infty}^\infty e^{-4\pi
y\lambda}d\rho(\lambda)}\right).
\end{equation}
In the case of a symmetric-GAF, define for $y\neq 0$ the function
\begin{equation}\label{eq S}
S(y) = \frac{d}{dy} \left(\frac{\psi'(y)}{4\pi\sqrt{\psi(y)^2-\psi(0)^2}}\right)= -
\frac{d}{dy}
\left(\frac{\int_{-\infty}^\infty \lambda e^{-4\pi
y\lambda}d\rho(\lambda)} {\sqrt{\left(\int_{-\infty}^\infty e^{-4\pi
y\lambda}d\rho(\lambda)\right)^2-\left(\int_{-\infty}^\infty
d\rho(\lambda)\right)^2 }}\right)\,,
\end{equation}
and the positive number
\begin{equation}\label{eq R}
R = \frac{1}{4\pi}
\sqrt{\frac{\psi''(0)}{\psi(0)}}=2\sqrt{\displaystyle\frac
{\int_{-\infty}^\infty \lambda^2
d\rho(\lambda)}{\int_{-\infty}^\infty d\rho(\lambda)}}.
\end{equation}
}
\end{note}

Finally, a stationary GAF is \emph{degenerate} if its spectral measure
$\rho_f$ consists of exactly one atom. Similarly a stationary symmetric GAF
is degenerate if $\rho_f$ consists of two symmetric atoms (i.e., $\rho_f = c(
\delta_q + \delta_{-q})$ for some $c, q>0$).

The following theorem is our main result. Denote by $m_1$ the linear Lebesgue
measure.

\begin{thm}\label{main}
Let $f$ be a stationary non-degenerate GAF or symmetric GAF, in the strip
$D_\Delta$ with $0<\Delta\leq\infty$. Let $\nu_{f,T}$ be a non-negative
locally-finite random measure on $(-\Delta,\Delta)$ defined by

\begin{equation*}
    \displaystyle \nu_{f,T}(Y)=\frac{1}{T}\,n_f([0,T)\times Y), \:\:Y\subset (-\Delta,\Delta)
\end{equation*}

Then:

\smallskip\par\noindent{\rm (i)}
Almost surely, the measures $\nu_{f,T}$ converge weakly to a measure $\nu_f$
when $T\rightarrow \infty$.

\smallskip\par\noindent{\rm (ii)}
The measure $\nu_f$ is not random (i.e. $\operatorname{var} \nu_f =0 $) if
and only if the spectral measure $\rho_f$ has no atoms.

\smallskip\par\noindent{\rm (iii)}
If the measure $\nu_f$ is not random, then:\\
\begin{align*}
&\nu_f = L\, m_1, &\text{ if } f \text{ is a GAF,} \\
&\nu_f = S\, m_1 + R\,\delta_0, &\text{ if } f \text{ is a symmetric-GAF, }\\
\end{align*}
where $\delta_0$ is the unit point measure at the origin.
\end{thm}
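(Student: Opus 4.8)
The plan is to read the whole statement as an ergodic-theoretic phenomenon for the measure-preserving flow of horizontal shifts $\{\theta_t\}_{t\in\R}$ (stationarity is exactly the statement that this flow preserves the law of $f$), combined with an Edelman--Kostlan computation of the first intensity of zeroes.

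To prove (i) I would fix $g\in C_c((-\Delta,\Delta))$ and write $\int g\,d\nu_{f,T}=\frac1T\sum_{z\in Z_f,\,\operatorname{Re} z\in[0,T)} g(\operatorname{Im} z)$. Setting $\Phi(f)=\sum_{z\in Z_f,\,\operatorname{Re} z\in[0,1)} g(\operatorname{Im} z)$, stationarity makes $\Phi(\theta_k f)$ the contribution of the window $[k,k+1)$, so $\int g\,d\nu_{f,n}=\frac1n\sum_{k=0}^{n-1}\Phi(\theta_k f)$ up to a boundary term that is controlled by monotonicity in $T$. I would then check $\Phi\in L^1$ using the first-moment (Offord-type) bound, i.e.\ that the expected number of zeroes in a bounded window is finite because the Edelman--Kostlan intensity is locally integrable, and apply Birkhoff's pointwise ergodic theorem to obtain a.s.\ convergence to $\E[\Phi\mid\mathcal I]$, where $\mathcal I$ is the invariant $\sigma$-field. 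Running over a countable dense family of test functions $g$ and invoking local uniform first-moment bounds for tightness upgrades this to a.s.\ weak convergence of $\nu_{f,T}$ to a random locally finite measure $\nu_f$.

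For (ii) I would invoke the classical dictionary for stationary Gaussian processes: the shift flow is ergodic if and only if the spectral measure $\rho_f$ is non-atomic. If $\rho_f$ has no atoms, ergodicity forces $\E[\Phi\mid\mathcal I]=\E[\Phi]$, a constant, so $\nu_f$ is deterministic. The substantial direction is the converse. If $\rho_f$ carries an atom, I would split $f=f_0+f_1$ into its finite-dimensional atomic part and its continuous-spectrum part, which are independent; under $\theta_t$ the atomic coefficients only rotate in phase, so their moduli are $\mathcal I$-measurable random data. The task is then to show that $\nu_f$ genuinely depends on these amplitudes, which is exactly where non-degeneracy enters: a lone atom (GAF) produces a zero-free exponential, and a symmetric pair (symmetric GAF) produces a rigid, deterministically-spaced lattice of real zeroes, both of which have non-random density. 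I would condition on the invariant amplitudes and argue that the conditional Edelman--Kostlan/Kac--Rice intensity of the conditioned field varies non-trivially with them, whence $\operatorname{var}\nu_f>0$.

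For (iii), once $\nu_f$ is non-random it equals its mean, and by stationarity $\nu_f(Y)=\E[n_f([0,1)\times Y)]$. In the GAF case this mean zero-counting measure is given by the Edelman--Kostlan formula $\frac1{4\pi}\Delta\log K(z,z)$; since $K(z,z)=\psi(\operatorname{Im} z)$ carries no $x$-dependence, the Laplacian collapses to $\frac1{4\pi}\frac{d^2}{dy^2}\log\psi(y)$, and integrating across the unit-width window yields $\nu_f=L\,m_1$ with $L$ as in \eqref{eq L}. In the symmetric case I would apply the generalized Edelman--Kostlan formula (Theorem~\ref{thm-Rdensity}): the off-axis complex zeroes supply the absolutely continuous part with density $S(y)$ from \eqref{eq S}, while the real zeroes of the stationary real process $f|_\R$ supply, via the Kac--Rice count $\frac1\pi\sqrt{-r''(0)/r(0)}$, a point mass of weight $R=\frac1{4\pi}\sqrt{\psi''(0)/\psi(0)}$ at the origin, giving $\nu_f=S\,m_1+R\,\delta_0$. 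I expect the converse half of (ii) to be the main obstacle: the ergodicity dictionary only guarantees that a non-ergodic system admits non-constant invariants, whereas I must show that the \emph{specific} functional $\nu_f$ detects them, ruling out accidental invariance of the zero density and isolating precisely the degenerate configurations that must be excluded.
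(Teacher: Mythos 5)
Your treatment of (i), of the ``no atoms $\Rightarrow$ non-random'' half of (ii), and of (iii) matches the paper's route almost exactly: windowed Birkhoff sums $\Phi(\theta_k f)$ made integrable by an Offord-type first-moment bound, a countable dense family of test functions, the Fomin--Grenander--Maruyama dictionary for ergodicity, and the Edelman--Kostlan formula (resp.\ its symmetric analogue, Theorem~\ref{thm-Rdensity}) to evaluate $\E\, n_f([0,1)\times Y)$. Your Kac--Rice computation $\frac1\pi\sqrt{-r''(0)/r(0)}$ of the atom at $0$ is a legitimate alternative to the paper's L'H\^opital evaluation of the distributional derivative and gives the same value $R$.

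The genuine gap is the converse half of (ii), and you have correctly diagnosed but not closed it. Saying you would ``condition on the invariant amplitudes and argue that the conditional intensity varies non-trivially with them'' is a restatement of what must be proved, not a proof; moreover, conditioned on the atomic coefficients the field $g+\eta_{\alpha,\beta}$ is a \emph{non-centered} Gaussian field, so neither Edelman--Kostlan nor Theorem~\ref{thm-Rdensity} applies to it directly, and there is no closed-form conditional intensity to inspect. The paper instead argues by contradiction: if the limit were a.s.\ constant, then (after a dominated-convergence step requiring an Offord-type bound \emph{uniform in} $T$, Proposition~\ref{prop offT}) the function $(\alpha,\beta)\mapsto \E_g N_1(g,\alpha,\beta)$ would be a.e.\ constant; it is shown to be continuous (Claim~\ref{clm Rcont}), its limit as $|\alpha|\to\infty$ is identified via Hurwitz's theorem as the zero-count of $\cos(2\pi z)$, namely $2\delta_0([a,b))$ (Claim~\ref{clm Rsol decr}), so the constant must equal that value even at $\alpha=\beta=0$; and finally a rigidity argument (solving the ODE $\psi'/\sqrt{\psi^2-1}=c$ for the diagonal of the kernel) shows only the degenerate spectral measure achieves this, a contradiction (Claim~\ref{clm Rno zeroes}). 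None of this chain --- the uniform tail bound, the limiting identification of the constant, or the rigidity step that rules out ``accidental invariance'' of the density --- is present in your sketch, and without it the converse of (ii) is unproven.
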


\vspace{3mm} The measure $\nu_f$ is referred to as "the horizontal limiting
measure of the zeroes of $f$", or simply "the limiting measure". In the
discussion and examples that follow, we assume the normalization
$\psi(0)=\int_\R d\rho(\lambda)=1$.
\begin{rmk}
{\rm The limiting measure $\nu_f$ might have atoms. Generally speaking, the
weak convergence in the theorem guarantees that $\nu_{f,T}([a,b))$ converges
to $\nu_f([a,b))$ for all $a,b\in(\-\Delta, \Delta)$ with a possible
exception of an at most countable set, which corresponds to atoms of the
limiting measure $\nu_f$. Yet, due to stationarity, in our case the limit
exists \emph{on all intervals}. We prove, as an example, the following result:

\begin{prop}\label{prop all segs}
Almost surely, for any $a,b\in(-\Delta,\Delta)$, we have:
$$\ds
\lim_{T\to\infty}\nu_{f,T}([a,b))=\nu_f([a,b)).$$
\end{prop}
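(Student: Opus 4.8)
My plan is to exploit stationarity through the ergodic theorem, which repairs precisely the endpoint behaviour that weak convergence alone leaves undetermined. Stationarity makes the horizontal shift $\theta_s\colon f(\cdot)\mapsto f(\cdot+s)$ a measure-preserving $\R$-action on the probability space carrying $f$, under which the zero-counting measure is equivariant, $n_{\theta_s f}(\,\cdot\,)=n_f(\,\cdot\,+s)$. Consequently, for a fixed bounded Borel set $Y\subset(-\Delta,\Delta)$ the variable $g(f)=n_f([0,1)\times Y)$ satisfies $\sum_{k=0}^{N-1}g(\theta_k f)=n_f([0,N)\times Y)$, and, $g$ being integrable by the finiteness of the mean zero-counting measure (an Edelman--Kostlan computation), Birkhoff's ergodic theorem yields
\[
\nu_{f,T}(Y)=\tfrac1T\,n_f([0,T)\times Y)\ \xrightarrow[T\to\infty]{}\ \E\!\left[g\mid\mathcal I\right]=:\nu_\infty(Y)\qquad\text{a.s.,}
\]
where $\mathcal I$ is the shift-invariant $\sigma$-field (the passage from integer to real $T$ is routine by monotonicity). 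Comparing with the weak convergence $\nu_{f,T}\Rightarrow\nu_f$ of Theorem~\ref{main}(i) through a countable family of continuous test functions identifies the random measures, $\nu_\infty=\nu_f$ almost surely. The essential gain is that the ergodic theorem applies to \emph{any} fixed Borel $Y$, in particular to every half-open interval and every singleton: for each fixed pair $a<b$ one gets $\nu_{f,T}([a,b))\to\nu_f([a,b))$ a.s., and for each fixed height $y$ one gets $\nu_{f,T}(\{y\})\to\nu_f(\{y\})$ a.s. Thus there is no ``atom at an endpoint'' obstruction at the level of a single, deterministic interval.

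It then remains to promote this to the quantified assertion — almost surely, \emph{simultaneously for all} $a,b$. First I would fix, once and for all, the full-measure event on which $\nu_{f,T}([p,q))\to\nu_f([p,q))$ for every interval with rational endpoints (a countable family). Given arbitrary $a<b$, I sandwich $[a,b)$ between rational half-open intervals $[p',q')\subset[a,b)\subset[p,q)$; monotonicity of $\nu_{f,T}$ in the interval gives $\nu_{f,T}([p',q'))\le\nu_{f,T}([a,b))\le\nu_{f,T}([p,q))$, and letting the rational endpoints tend to $a$ and $b$ squeezes the liminf and limsup of $\nu_{f,T}([a,b))$ between $\nu_f((a,b))$ and $\nu_f([a,b])$. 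When $a$ and $b$ are continuity points of $\nu_f$ this already concludes; in general a residual gap of size $\nu_f(\{a\})+\nu_f(\{b\})$ survives, because the enclosing and enclosed rational brackets respectively over- and under-count any atom masses that $\nu_f$ places at an irrational endpoint. Closing this gap is exactly the statement that the endpoint atom masses are recovered in the limit, i.e. that $\nu_{f,T}(\{y\})\to\nu_f(\{y\})$ holds \emph{on one event, for every} $y$.

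This simultaneity is where the hard part lies. For each fixed $y$ the convergence is again Birkhoff's theorem, but the atoms of $\nu_f$ can sit at \emph{random} heights: already for the (non-degenerate) GAF with spectral measure $\rho=\tfrac12(\delta_{\lambda_1}+\delta_{\lambda_2})$ all zeroes lie on a single horizontal line whose ordinate is a genuinely random variable, so $\nu_f$ is a point mass at a random height and a countable union over fixed $y$'s cannot reach it. I would control this by ruling out escaping atomic mass: using the Offord-type tail bounds proved elsewhere in the paper to bound, uniformly in $T$, the number of zeroes of $f$ in thin horizontal boxes $[0,T)\times(y-\varepsilon,y+\varepsilon)$, one excludes a moving sequence $y_n\to y_0$ along which $\nu_{f,T_n}(\{y_n\})$ stays bounded away from its limit; this upgrades the fixed-height convergence of atomic parts to the simultaneous-in-$y$ statement. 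With that in hand the squeeze of the previous paragraph closes the endpoint gap for every $a,b$ at once, and the proposition follows.
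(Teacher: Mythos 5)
Your setup is sound and you have correctly isolated the crux: after the Birkhoff argument for fixed Borel sets and the rational-endpoint sandwich, everything reduces to showing that, on a single full-measure event, $\nu_{f,T}(\{y\})\to\nu_f(\{y\})$ \emph{simultaneously} for all atoms $y$ of $\nu_f$, even though these atoms sit at random heights. But the step you propose to close this gap does not work. The Offord-type estimates control the total number of zeroes in a fixed compact box with exponential tails; they say nothing about whether the mass that $\nu_{f,T}$ places in a thin band $(y_0-\varepsilon,y_0+\varepsilon)$ is concentrated \emph{exactly} on the line $\operatorname{Im}z=y_0$ or merely accumulates near it. That is precisely the distinction at stake: $\nu_{f,T}(\{y_0\})=\tfrac1T\#\{$zeroes in $[0,T)$ with imaginary part equal to $y_0\}$, and a priori one could have $\nu_{f,T}\Rightarrow\nu_f$ with $\nu_f(\{y_0\})>0$ while every $\nu_{f,T}$ gives the singleton $\{y_0\}$ zero mass (zeroes at heights $y_0+\epsilon_k$ with $\epsilon_k\to0$). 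A uniform-in-$T$ bound on $n_f([0,T)\times(y_0-\varepsilon,y_0+\varepsilon))$ cannot rule this out --- indeed near a genuine atom that quantity \emph{is} of order $T$ times the atom mass, so there is no "escaping mass" to exclude; the problem is to show the mass lands on the singleton, not near it. Your sentence about "excluding a moving sequence $y_n\to y_0$" is an assertion of the conclusion, not an argument.

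The idea you are missing is the Ergodic Decomposition Theorem, which is how the paper handles the randomness of the atom locations. Writing $\Pro=\int\beta_\omega\,d\Pro(\omega)$ over ergodic components, one observes that for each ergodic measure $\gamma$ the event "$\nu_f$ has an atom in the interval $I$" is shift-invariant and hence $\gamma$-trivial; consequently, \emph{conditionally on an ergodic component the atoms of $\nu_f$ sit at deterministic heights}. For each such fixed height $a$ your own fixed-$y$ Birkhoff argument (applied to the stationary sequence $X_k(a)=n_f([k,k+1)\times\{a\})$ under $\gamma$) gives $\nu_{f,N}\{a\}\to\nu_f\{a\}$ $\gamma$-a.s., and since there are countably many atoms this holds simultaneously for all of them, $\gamma$-a.s.; integrating over the decomposition gives the statement under $\Pro$. (The paper also has to verify, via the L\'evy--Prokhorov metric and a measurable selection of atoms, that the event "convergence holds at every atom of the limit" is measurable --- a point your sketch does not address but which becomes unavoidable once the atoms are random.) With that event in hand, the deterministic measure-theoretic squeeze you describe finishes the proof exactly as you intended.
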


The proof is included in appendix \ref{app all segs}. }
\end{rmk}

\begin{rmk}
{\rm The part of the theorem pertaining to GAFs extends the aforementioned
Wiener's theorem. In his work, Wiener assumed that the spectral measure
$\rho$ has the $L^2$-density $d\rho(\lambda)=|\phi(\lambda)|^2d\lambda$, that
satisfies convergence conditions:}
\smallskip\par\noindent
For any $|y|<\Delta$,
\[
\int_{-\infty}^\infty (1+x^2)^2\, |\widehat{\phi}(x+iy)|^2
dx<\infty,
\]
and
\[
\int_{-\infty}^\infty (1+x^2)\, |(\widehat{\phi})'(x+iy)|^2
dx<\infty.
\]
{\rm As above, $\widehat{\phi}$ is the Fourier transform of $\phi$.}
Under these assumptions, Wiener proved that the limiting measure $\nu_f$ exists and equals $L m_1$ where L is defined by \eqref{eq L}.

\end{rmk}

\begin{rmk}\textbf{(atomic spectral measure)} \label{rmk atom}
{\rm

Consider a spectral measure consisting of two atoms:
\[
\rho = \frac{1}{2}(\delta_{-q} + \delta_q)\,.
\]
The corresponding GAF is $f(z)=(\zeta_1 e^{-2\pi iqz} + \zeta_2 e^{2\pi
iqz})/\sqrt{2}$, where $\zeta_1 , \zeta_2 \sim \mathcal{N}_\mathbb{C}(0,1)$,
independently. The zeroes of such a function are
\[
z_k = \frac{1}{4\pi q} \left[ \arg(\frac{\zeta_2}{\zeta_1})+2\pi k -
i\log \left|\frac{\zeta_2}{\zeta_1} \right| \right]\,, \qquad
k\in\mathbb{N}\,.
\]
We see that all zeroes lie on the same (random) horizontal line, equally
spaced upon it. The height of this horizontal line is a non-degenerate random
variable, and so in this example $\nu_f$ is indeed random.

\medskip
For symmetric GAFs, the spectral measure above is degenerate (all zeroes of
the corresponding function shall be real). We mention that it is possible to construct a random analytic
function with continuous spectrum, for which a given asymptotic proportion of
zeroes lie on the real line. For this, choose a continuous symmetric spectral
measure, sufficiently close to the degenerated measure $\delta_q +
\delta_{-q}$.

}
\end{rmk}

\begin{rmk}\textbf{(behavior near the boundary and near the real line.)}
{\rm We observe that $S(y)$ and $L(y)$ have the same asymptotic behavior as
$y$ approaches the boundary $\pm \Delta$. Therefore, zeroes of a GAF and of a
symmetric GAF with the same kernel behave similarly near the boundary of the
domain of definition.

For a symmetric GAF, we observe a "contraction" of the zeroes to the real
line: there are zeroes on the line itself, but they are scarce as we approach
it from below or above (see figure \ref{fig-all} below). This is confirmed by
a straightforward computation, which shows that $S(y) = O(y)$, as
$y\rightarrow 0$.

}
\end{rmk}

\medskip

\subsection{Expected Zero-Counting Measures}
In part(iii) of the theorem, the limit $\nu_f(a,b)$ is actually the
expectation $\E n_f([0,1]\times[a,b])$. In order to calculate this quantity
in the GAF case, we use the following classical formula, which appeared in
Edelman and Kostlan's joint work on random polynomials \cite{EK}. Several
proofs of this formula are known (\cite{GAF book}, chapter 2).
\begin{thm}\emph{(Edelman-Kostlan formula)}
 For a Gaussian Analytic Function $f$ with covariance kernel $K(z,w)$, the expected zero-counting measure is given by
\begin{equation}\label{eq EK}
\mathbb{E}(n_f) = \frac{1}{4\pi}\bigtriangleup \log K(z,z).
\end{equation}
\end{thm}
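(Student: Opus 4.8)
The plan is to reduce the statement to the classical distributional identity expressing the zero-counting measure as a Laplacian of a log-modulus, and then to average that identity. For any holomorphic $f$ that is not identically zero, the function $\log|f|$ is locally integrable and satisfies
\[
n_f = \frac{1}{2\pi}\bigtriangleup \log|f|
\]
in the sense of distributions. Indeed, $\log|f|$ is harmonic off the zero set, while near a zero $z_k$ of multiplicity $m$ one has $\log|f(z)| = m\log|z-z_k| + (\text{harmonic})$, and $\frac{1}{2\pi}\bigtriangleup\log|z-z_k| = \delta_{z_k}$ since $\bigtriangleup\log|z| = 2\pi\,\delta_0$ in the plane. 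This is the Poisson--Jensen formula read distributionally.

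The second step is to test both sides against an arbitrary $\phi\in C_0^\infty(D)$ and take expectations. Using self-adjointness of the distributional Laplacian (equivalently, Green's identity, moving $\bigtriangleup$ from $\log|f|$ onto $\phi$) and Fubini's theorem to bring $\E$ inside, I get
\[
\E\!\left(\int_D \phi\, dn_f\right) = \frac{1}{2\pi}\int_D (\bigtriangleup\phi)(z)\,\E\log|f(z)|\, dm(z),
\]
where $dm$ is planar Lebesgue measure. The interchange is legitimate once $z\mapsto \E\big|\log|f(z)|\big|$ is shown to be locally bounded.

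The third, and computationally decisive, step is to evaluate $\E\log|f(z)|$ at a fixed point. Since $f(z)$ is a centered complex Gaussian with $\E|f(z)|^2 = K(z,z)$, it equals in distribution $\sqrt{K(z,z)}\,\xi$ with $\xi\sim\mathcal N_{\mathbb C}(0,1)$, whence
\[
\E\log|f(z)| = \tfrac{1}{2}\log K(z,z) + \E\log|\xi|.
\]
The constant $\E\log|\xi|$ is finite (as $|\xi|^2$ is $\mathrm{Exp}(1)$) and independent of $z$, so it is annihilated by the Laplacian. Substituting back gives $\E(n_f) = \frac{1}{2\pi}\bigtriangleup\big(\tfrac12\log K(z,z)\big) = \frac{1}{4\pi}\bigtriangleup \log K(z,z)$, as claimed.

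The main obstacle is analytic rather than algebraic: I must justify that $\log|f(\cdot)|$ is integrable both in $z$ and in the probability variable, so that Fubini and the distributional integration by parts apply, and that $\bigtriangleup\log K(z,z)$ is a genuine (locally finite) measure. The latter holds because $\log K(z,z)$ is subharmonic, $K(z,z)=\sum_n|\phi_n(z)|^2$ being a log of a sum of squared moduli of holomorphic functions; the former reduces, via the scaling $f(z)\stackrel{d}{=}\sqrt{K(z,z)}\,\xi$, to the single integrability $\E\big|\log|\xi|\big|<\infty$ together with the local positivity and boundedness of $K(z,z)$, which is guaranteed by the standing assumption that $f$ has no deterministic zeroes.
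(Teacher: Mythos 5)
Your proof is correct, and it is the standard argument. Note that the paper itself does not prove this theorem: it states the Edelman--Kostlan formula as a known classical result and cites the literature for its proof, reserving its own effort for the symmetric-GAF analogue, Theorem \ref{thm-Rdensity}. Your argument is nonetheless worth comparing with that proof, because the two share the same skeleton --- write $n_f=\frac{1}{2\pi}\bigtriangleup\log|f|$ distributionally, move the Laplacian onto a test function, interchange $\E$ with the integral via a local bound on $\E\bigl|\log|f(z)|\bigr|$, and then compute $\E\log|f(z)|$ pointwise. The step where the two diverge is exactly the one your proof makes trivial: for a GAF, $f(z)$ is a rotation-invariant complex Gaussian, so $f(z)\stackrel{d}{=}\sqrt{K(z,z)}\,\xi$ and $\E\log|f(z)|=\frac12\log K(z,z)+C$ in one line. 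For a symmetric GAF this scaling identity fails (the real and imaginary parts of $f(z)$ have unequal variances and are correlated), which is precisely why the paper must diagonalize the $2\times 2$ covariance matrix, pass to polar coordinates, and invoke Jensen's formula to evaluate the resulting angular integral. Your justification of the Fubini step (local boundedness of $K(z,z)$ above and away from zero, plus $\E\bigl|\log|\xi|\bigr|<\infty$ since $|\xi|^2$ is exponential) is sound and is the complex-Gaussian counterpart of the paper's argument splitting the integral over $\Omega_{+}$ and $\Omega_{-}$; no gap there.
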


This should be understood as equality of measures in the following sense: for
any compactly supported $h\in C^\infty(D)$,
$$\mathbb{E}\int_D h(z) dn_f(z) = \frac{1}{4\pi}\int h(z)
\:\bigtriangleup \log K(z,z) dm_2(z).$$
Here and throughout this paper, $m_2$ denotes the planar Lebesgue measure.
\vspace{3mm}

The proofs of this formula depend inherently on the fact that $f(z)$ is a
{\em complex} Gaussian random variable
 for all $z$, which fails for the symmetric GAF.
To that end, we prove the following result, that extends previous results by
Shepp and Vanderbei \cite{S&V}, Prosen \cite{Pros} and Macdonald \cite{Mac}.

\begin{thm}\label{thm-Rdensity}
For a symmetric GAF $f$ on some region with covariance kernel $K(z,w)$, the
expected zero-counting measure is given by
\begin{equation}\label{eq-Rdensity}
\mathbb{E}(n_f) =  \frac{1}{4\pi} \bigtriangleup\log \left(
K(z,z)+\sqrt{K(z,z)^2 - |K(z,\overline{z})|^2} \right),
\end{equation}
where the Laplacian is taken in the distribution sense.
\end{thm}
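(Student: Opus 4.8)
The plan is to reduce the expected zero-counting measure of the real (symmetric) GAF $f$ to a local computation, using the fact that $\E(n_f)$ is determined by the first-intensity of zeroes, which in turn depends only on the joint Gaussian distribution of $f$ and its derivatives at each point. For a symmetric GAF, $f(z)$ is \emph{not} a complex Gaussian: since the coefficients $b_n$ are real, the pair $(f(z),\overline{f(z)})$ carries two genuinely distinct second moments, namely $K(z,z)=\E|f(z)|^2$ and the "pseudo-covariance" $K(z,\overline z)=\E(f(z)^2)$ (using $\overline{\phi_n(z)}=\phi_n(\overline z)$). My first step would be to set up the general Kac-Rice / Edelman-Kostlan machinery for a non-circularly-symmetric complex Gaussian field, writing the expected density as an integral against the Gaussian density of $(f, f')$ at $z$, and expressing everything through the covariance matrix of the real and imaginary parts of $f(z)$.

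The second step is to exploit the structure $\bigtriangleup\log(\cdot)$ suggested by the statement. I would verify that $\E(n_f)=\tfrac{1}{4\pi}\bigtriangleup G(z)$ for $G(z)=\log\bigl(K(z,z)+\sqrt{K(z,z)^2-|K(z,\overline z)|^2}\bigr)$ away from the real axis by a direct Kac-Rice computation, showing the intensity equals $\tfrac1{4\pi}\bigtriangleup G$ pointwise on $D\setminus\R$. The quantity under the square root, $K(z,z)^2-|K(z,\overline z)|^2$, is precisely (up to normalization) the determinant of the covariance matrix of $(\operatorname{Re}f,\operatorname{Im}f)$, so it is nonnegative and $G$ is well defined; when $K(z,\overline z)=0$ the circular symmetry is restored and $G$ reduces to $\log K(z,z)$, recovering \eqref{eq EK} as a consistency check. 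Carrying out this pointwise identification cleanly—correctly tracking how $f'$ conditions on $f$ in the degenerate (real-Gaussian) setting—is where the bulk of the honest calculation lies.

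The main obstacle is the behavior on the real line $D\cap\R$ itself, where a symmetric GAF has an atomic component of zeroes (the real zeroes), so the expected zero-counting measure is not given by a smooth density there. On $\R$ the covariance matrix of $(\operatorname{Re}f,\operatorname{Im}f)$ degenerates (indeed $\operatorname{Im}f\equiv0$ on $\R$), so $K(z,z)=|K(z,\overline z)|$ and the radicand vanishes; the function $G$ develops a singularity whose distributional Laplacian produces exactly the singular line-measure of real zeroes. I would handle this by interpreting $\bigtriangleup G$ in the distributional sense from the outset, approximating $G$ (or equivalently the determinant under the root) by a regularization, computing $\bigtriangleup G$ as a sum of the absolutely continuous part off $\R$ plus a singular part supported on $\R$, and then identifying the singular part with the real-zero intensity via a separate one-dimensional Kac-Rice argument. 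Matching these two contributions and verifying they assemble into the single formula \eqref{eq-Rdensity} is the delicate point; the previous results of Shepp-Vanderbei, Prosen and Macdonald correspond to special kernels where this identification was done by hand, and the task here is to show the $\bigtriangleup\log(\cdot)$ form captures both regimes simultaneously and in full generality.
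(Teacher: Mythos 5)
Your plan is viable in outline but takes a genuinely different, and substantially harder, route than the paper. You propose a pointwise Kac--Rice computation for the non-circular complex Gaussian field $(f,f')$ off the real axis, plus a separate one-dimensional Kac--Rice argument on $\R$, and then a matching of the resulting singular line-measure with the distributional Laplacian of $G$. The paper instead never computes a first intensity pointwise: it starts from the deterministic identity $n_f=\tfrac{1}{2\pi}\bigtriangleup\log|f|$, justifies the exchange $\E(n_f)=\tfrac{1}{2\pi}\bigtriangleup\,\E\log|f|$ by a Fubini/integrability argument, and then evaluates $\E\log|f(z)|$ in closed form. The only structural input is your same observation that $(\operatorname{Re}f,\operatorname{Im}f)$ has covariance matrix $\Sigma$ with $\operatorname{tr}\Sigma=K(z,z)$ and $\det\Sigma=\tfrac14\bigl(K(z,z)^2-|K(z,\overline z)|^2\bigr)$; diagonalizing, passing to polar coordinates, and invoking Jensen's formula for $z^2+c$ gives $\E\log|f(z)|=\tfrac12\log\bigl(\lambda_1+\lambda_2+2\sqrt{\lambda_1\lambda_2}\bigr)+C$, and one distributional Laplacian then yields \eqref{eq-Rdensity} everywhere at once --- the atom on $\R$ falls out automatically from the singularity of the logarithm where the radicand vanishes, with no separate real-line analysis and no regularization.

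The gap in your proposal, as written, is that it defers exactly the two steps that make your route difficult: (a) the pointwise identity ``Kac--Rice intensity $=\tfrac{1}{4\pi}\bigtriangleup G$'' on $D\setminus\R$, which for a non-circular field requires conditioning $f'$ on $f$ with a degenerate real-Gaussian structure and is a genuinely messy verification (this is essentially what Prosen and Macdonald did by hand in special cases); and (b) the identification of the distributional singular part of $\bigtriangleup G$ across $\R$ with the one-dimensional Kac--Rice density, which requires computing the jump of $\partial G/\partial y$ across the real axis and checking it equals the real-zero intensity for an arbitrary kernel. Neither is carried out, and neither is routine. If you want to complete a proof along your lines you must do both; alternatively, adopting the $\E\log|f|$ potential-theoretic route avoids both entirely and is where the real economy of the paper's argument lies.
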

Notice that stationarity is not assumed in the last two theorems. Moreover,
this formula combines information about real and complex zeroes.

\section{Examples}\label{sec ex}

\subsection{Paley-Wiener Process (Sinc-kernel Process)}\label{ex-sin}
Consider the spectrum
\[
d\rho_a(\lambda) =\frac{1}{2a} \chi_{[-a ,a]}(\lambda)d\lambda\,,
\qquad a>0.
\]
Condition \eqref{cond} holds for any $\Delta>0$, so the sample function $f$
is entire. The kernel is:
$$K(z,w)=\frac{\sin(2\pi a (z-\overline w))}{2\pi a(z-\overline w)}=r(z-\overline{w})$$

A base for construction of the GAF (in the sense of definition \ref{def1}) is
$$\phi_n(z)=\frac{\sin(2\pi a z)} {2\pi a z - n\pi}, \qquad n\in\mathbb{Z}.$$


This example yields a surprising construction of a random series of simple
fractions with known poles and stationary zeroes: Take for instance $a=1$.
Our function is
\[
f(z) = \sum a_n\, \frac{\sin(2\pi z)} {2\pi z - n\pi}\,
\]
where $\{a_n\}$ are independent Gaussian random variables. Almost surely,
$Z_f \cap \frac{1}{2}\mathbb{Z}= \emptyset$, so we may divide by $\sin (2\pi
z)/\pi$ and get the random series
\[
g(z)=\sum \frac {a_n}{2 z - n}\,.
\]
The poles of $g$ are known (and lie on a one-dimensional lattice), but its
zeroes are a random set invariant to \emph{all} horizontal shifts!

\vspace{2.5mm} Using Theorem \ref{main} for $\rho_a$, we get that the
zero-counting measure has the following density of zeroes:
$$ L_a\left(\frac{y}{4\pi a}\right) =4\pi a^2\,\frac{d}{dy} \left(\coth y-\frac{1}{y}\right).$$


Similarly, the symmetric GAF with the same spectral measure has the
continuous density of zeroes
$$ S_a\left(\frac{y}{4\pi a}\right) = 4\pi a^2 \,\frac {d}{dy}\left(\frac{\cosh y -\frac{\sinh y}{y}}{\sqrt{\sinh^2 y-y^2}}\right)$$
plus an atom at $y=0$, of size $$R = \frac{a}{\sqrt 3}.$$

Figure \ref{fig-w} represents the graphs of the continuous
densities for the parameter $a=\frac{1}{4\pi}$.

\begin{figure}[htp]\label{fig-all}
  \begin{center}
    \subfigure[Paley-Wiener]{\label{fig-w}\includegraphics[scale=0.5]{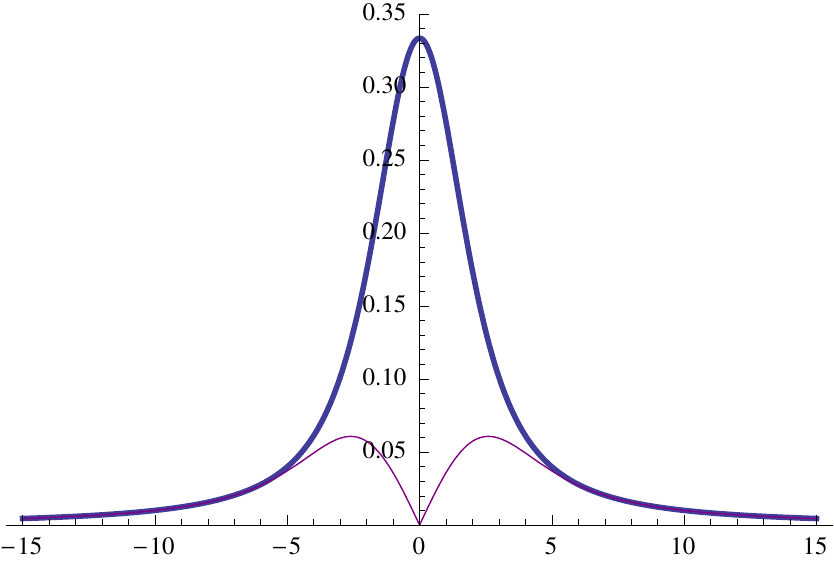}}
    \subfigure[Fock-Bargmann]{\label{fig-FB}\includegraphics[scale=0.5]{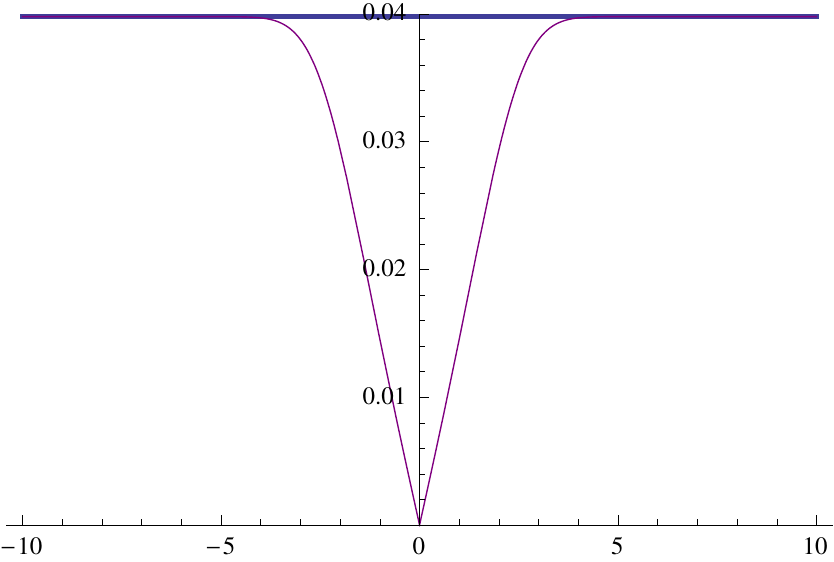}}
    \subfigure[exponential spectrum]{\label{fig-exp}\includegraphics[scale=0.5]{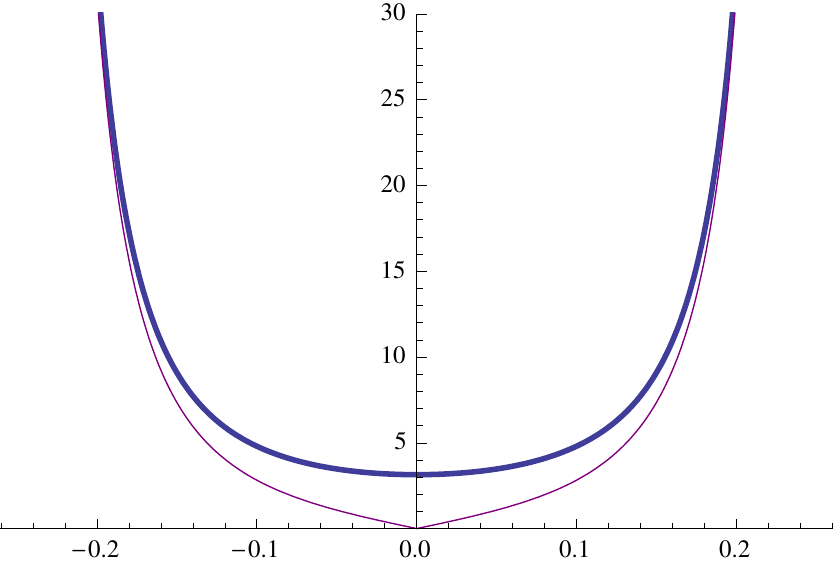}}
  \end{center}
  \caption{Horizontal density of zeroes for GAF and symmetric GAF models with the same kernel.
  In each model, the lower graph represents the continuous component of the mean zero counting-measure for the symmetric GAF
  (the atomic part is an atom at $y=0$, which is not graphed).
  The upper graph represents the continuous (and only) part of this measure for the appropriate GAF.}
  \label{fig:edge}
\end{figure}

\subsection{Fock-Bargmann Space (Gaussian Spectrum)}

Set
\[
d\rho_a(\lambda) = \frac 1 {a\sqrt{\pi}}e^{-\lambda^2/a^2}
d\lambda\,, \qquad a>0.
\]
Once again, $f$ is entire. The Fourier transform of the measure is
\[
r(z)=\frac 1 {a\sqrt{\pi}} \int_{-\infty}^{\infty}
e^{-\lambda^2/a^2}e^{2\pi i\lambda z}d\lambda = e^{-a^2 \pi^2 z^2},
\]
therefore the covariance kernel is:
$$K(z,w)= e^{-a^2\pi^2 (z-\overline w)^2}.$$
This space has an orthonormal basis of the form
$\frac{(bz)^n}{\sqrt{n!}}e^{-cz^2} $, where $b =\sqrt 2\frac a \pi$ and $c =
-\frac {a^2}{\pi^2}$.


In this model, the density of zeroes is constant:
$$ L_a\left(\frac y {2\pi a}\right)= 2\pi a^2$$

This is the only model with Lebesgue measure as expected counting measure of
zeroes. For more information about this model and why the distribution of
zeroes determines the GAF, see \cite{ST1}, \cite{S&N-intro},
\cite{S&N-survey} or \cite[Chapters 2.3, 2.5]{GAF book}.

However, for the real coefficients case the continuous part of the limiting
measure has density
$$ S_a \left(\frac y {2\pi a}\right) = 2\pi a^2 \,\frac d{dy}\left( \frac{e^{y^2}}{\sqrt{e^{2y^2}-1}} \right)$$
and the atom at $y=0$ is of size $\sqrt 2 a$.

Both continuous densities are graphed in \ref{fig-FB}, for the parameter $a=\frac{1}{4\pi}$.

\subsection{Exponential Spectrum}
Consider a symmetric measure with exponential decay:
\[
d\rho(\lambda) = \text{sech}(\pi\lambda) d\lambda =
\frac{1}{\cosh(\pi \lambda)} d\lambda\,,
\]
then $r(z) = \text{sech}(\pi z)$ as well. This model is valid in the strip
$-\frac{1}{4}<\text{Im}(z)<\frac{1}{4}$. Here
$$L(y) =\frac {\pi}{\cos^2(2\pi y)}.$$

For the symmetric GAF in this model, we have
$$S(y)=\frac{\pi|\sin(2\pi y)|}{\cos^2(2\pi y)}$$
We see that the zeroes concentrate near the boundaries of the region of
convergence (figure \ref{fig-exp}).

\section{Proof of Theorem \ref{thm-Rdensity} - Zero-Counting Measure for a symmetric GAF}\label{sec RGAP-density}
In this section we prove Theorem \ref{thm-Rdensity}. Similar formulas were
proved in specific cases. Our proof follows Macdonald \cite{Mac}, who has
considered random polynomials (also in multi-dimensional case). A novelty is
in extension of his result to arbitrary symmetric GAFs.

Recall that for any analytic function $f$ (not necessarily random) in a
domain $D$ we have
$$ n_f = \frac{1}{2\pi}\bigtriangleup \log |f|.$$
This is understood in the distribution sense.

Using this for our random $f$, we would like to take expectation of both
sides, to get:
\begin{equation}\begin{aligned}\label{Justify}
\mathbb{E} \left[ \int_X h(z) dn_f(z)\right] = \:
& \mathbb{E} \left[ \frac{1}{2\pi}\int_X \bigtriangleup h(z) \log |f(z)| dm_2(z) \right] =\\
& \frac{1}{2\pi}\int_X \bigtriangleup h(z) \mathbb{E}\left[\log
|f(z)|\right] dm_2(z),
\end{aligned}
\end{equation}
where $m$ denotes the Lebesgue measure in $\C$. The last equality is
justified by Fubini's Theorem, as we will show in the end of this section.
Thus we can conclude that (in the weak sense):
\begin{equation} \label{Enf}
\mathbb{E}(n_f) = \frac{1}{2\pi}\bigtriangleup
\mathbb{E}\log|f|.
\end{equation}

Let us return to our setup: $f$ is a random function generated by a basis
$\phi_k(z)$ of holomorphic functions, each real on $\mathbb{R}$, and such
that the sum $\sum_k |\phi_k(z)|^2$
converges locally-uniformly. 
Denote $\phi_k(z) = u_k(z)+i v_k(z)$ where $u_k, v_k$ are real functions. Our
random function is decomposed thus:
$$f(z)=\sum b_k \phi_k(z) = \sum b_k u_k(z) + i \sum b_k v_k(z)=u(z)+i v(z),$$
where $b_k \sim \mathcal{N}_{\mathbb{R}} (0,1)$ are \emph{real} Gaussian standard variables. $(u(z),v(z))$ have a joint Gaussian distribution, with mean (0,0) and covariance matrix
$$\Sigma =
 \left( \begin{array}{c c}
\sum u_k^2  &  \sum u_k v_k \\
\sum u_k v_k & \sum v_k^2
\end{array}\right). $$

\begin{lem}\label{eigenv}
 The matrix $\Sigma$ from above has 2 eigenvalues:
$$\lambda_{2,1} = \frac{K(z,z)\pm |K(z,\overline{z})|}{2}$$
where $K(z,w) = \sum\phi_k(z) \overline{\phi_k(w)}= \sum \phi_k(z)\phi_
k(\overline{w})$.
 \end{lem}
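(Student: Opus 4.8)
The plan is to reduce everything to the standard eigenvalue formula for a real symmetric $2\times 2$ matrix and then match two scalar invariants. Write $a=\sum_k u_k^2$, $b=\sum_k v_k^2$, and $c=\sum_k u_k v_k$, so that $\Sigma=\bigl(\begin{smallmatrix} a & c \\ c & b\end{smallmatrix}\bigr)$. Since $\Sigma$ is symmetric, its eigenvalues are
\[
\lambda = \frac{a+b}{2} \pm \frac{1}{2}\sqrt{(a-b)^2 + 4c^2}\,.
\]
Thus it suffices to show two identities: that the trace satisfies $a+b = K(z,z)$, and that the quantity under the radical satisfies $(a-b)^2 + 4c^2 = |K(z,\overline{z})|^2$. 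Matching these to the displayed formula then finishes the proof immediately.

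The trace identity is the easy half. From $\phi_k = u_k + i v_k$ we get $|\phi_k|^2 = u_k^2 + v_k^2$, hence
\[
a+b = \sum_k \bigl(u_k^2+v_k^2\bigr) = \sum_k |\phi_k(z)|^2 = K(z,z)\,,
\]
using the definition \eqref{ker-base2} of the kernel. This already accounts for the $K(z,z)/2$ term appearing in $\lambda_{2,1}$.

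The discriminant identity is where the reality hypothesis on the basis enters, and this is the step I expect to carry the real content. Because each $\phi_k$ is real on $\R$, we have $\overline{\phi_k(w)} = \phi_k(\overline{w})$, and therefore $K(z,\overline{z}) = \sum_k \phi_k(z)\overline{\phi_k(\overline{z})} = \sum_k \phi_k(z)^2$, which is exactly the second expression for the kernel recorded in the statement of the lemma. Now expand $\phi_k^2 = (u_k+iv_k)^2 = (u_k^2 - v_k^2) + 2i\,u_k v_k$ and sum over $k$, obtaining $K(z,\overline{z}) = (a-b) + 2i\,c$. Taking the squared modulus gives
\[
|K(z,\overline{z})|^2 = (a-b)^2 + 4c^2\,,
\]
which is precisely the expression under the radical above. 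Substituting both identities into the eigenvalue formula yields $\lambda_{2,1} = \tfrac{1}{2}\bigl(K(z,z) \pm |K(z,\overline{z})|\bigr)$, as claimed. The only thing to double-check is that convergence of $\sum_k|\phi_k(z)|^2$ guarantees absolute convergence of the three sums $a,b,c$ and of $\sum_k\phi_k(z)^2$, so that all the rearrangements above are legitimate; this follows from $|u_k v_k|,|u_k^2-v_k^2| \le u_k^2+v_k^2 = |\phi_k|^2$.
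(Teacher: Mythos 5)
Your proof is correct and follows essentially the same route as the paper's: both reduce the problem to the two scalar invariants of the symmetric $2\times 2$ matrix $\Sigma$, identifying the trace with $K(z,z)=\sum_k|\phi_k|^2$ and using the identity $\sum_k\phi_k(z)^2=K(z,\overline z)$ to evaluate the remaining invariant (you compute the discriminant $(a-b)^2+4c^2=|K(z,\overline z)|^2$ directly, while the paper equivalently computes the determinant). Your closing remark on absolute convergence is a small bonus the paper leaves implicit.
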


\begin{proof}
For any complex number $\phi = u+i v$, we have:
$$ u^2 = \frac{1}{2}\left(|\phi|^2 + \text{Re}(\phi^2) \right),
\;\;
   v^2 = \frac{1}{2}\left(|\phi|^2 - \text{Re}(\phi^2)\right),
   \;\;
   uv = \frac{1}{2} \text{Im}(\phi^2).$$

Applying this, we can rewrite $\Sigma$ as
$$\Sigma =
 \left( \begin{array}{c c}
\frac{1}{2}\left(\sum|\phi_k|^2 + \text{Re}\sum \phi_k^2 \right)&
\frac{1}{2}\text{Im}\sum\phi_k^2 \\
\frac{1}{2}\text{Im}\sum\phi_k^2 & \frac{1}{2}\left(\sum|\phi_k|^2 -
\text{Re}\sum \phi_k^2 \right)
\end{array}\right), $$
and then calculate it's determinant and trace:
\begin{equation}\label{det trace}
\begin{aligned}
\lambda_1\lambda_2 = \det\Sigma &=
\frac{1}{4}\left((\sum|\phi_k|^2)^2-(\text{Re}\sum(\phi_k^2))^2 -
(\text{Im}\sum(\phi_k^2))^2 \right)\\
 &=\frac{1}{4}\left(K(z,z)^2-|K(z,\overline{z})| \right)\\
\lambda_1+\lambda_2 = \text{trace }\Sigma &=\sum|\phi_k|^2 = K(z,z)
\end{aligned}\end{equation}
The lemma follows.
\end{proof}

Using the law of bi-normal distribution, we get:
\begin{align}
\mathbb{E}[\log |f(z)|] = \frac{1}{2\pi \sqrt{\det\Sigma}}
\iint_{\mathbb{R}^2} \log(\sqrt{x^2+y^2})
e^{-\frac{1}{2} (x,y)\Sigma^{-1} (x,y)^{T}} dx\, dy = \label{eq-myint}\\
\frac{1}{2\pi \sqrt{\det\Sigma}} \iint_{\mathbb{R}^2}
\log(\sqrt{x^2+y^2}) e^{-\frac{1}{2} (\lambda_1^{-1} x^2
+\lambda_2^{-1} y^2)} dx\, dy \notag
\end{align}


Applying to the last integral the change of variables $x=u\sqrt{\lambda_1},\;
y =w\sqrt{\lambda_2}$, with Jacobian $\sqrt{\lambda_1
\lambda_2}=\sqrt{\det\Sigma}$, we have
\begin{align*}
\mathbb{E}[\log |f(z)|] = \frac{1}{2\pi}\iint_{\mathbb{R}^2}
\log(\sqrt{\lambda_1 u^2 +\lambda_2 w^2}) e^{-(u^2+w^2)/2}du\,dw
\end{align*}

Now, changing to polar coordinates $u = r\cos \theta, \: w=r\sin\theta$ we
get:
\begin{align*}
\mathbb{E}[\log |f(z)|]
=\frac{1}{2\pi}\int_0^{2\pi} \log(\sqrt{\lambda_1 \cos^2\theta
+\lambda_2 \sin^2\theta})\,d\theta + C,
\end{align*}
where $C$ is a constant which does not depend on the point $z$ (i.e., is
independent of $\lambda_1$ and $\lambda_2$). In the following, we write $C$
for any such constant (which may be different each time we use this symbol).
These constants will vanish when we apply Laplacian (recall \eqref{Enf}).

So, the integral we should compute is:
\begin{align*}
&\int_0^{2\pi}  \log\left|\sqrt{\lambda_1} \cos\theta
+i \sqrt{\lambda_2} \sin\theta \right|\,\frac{d\theta}{2\pi} =\\
& \log(\sqrt{\lambda_1}+\sqrt{\lambda_2}) + \int_0^{2\pi} \log
\left\vert
e^{2i\theta}+\frac{\sqrt{\lambda_1}-\sqrt{\lambda_2}}{\sqrt{\lambda_1}+\sqrt{\lambda_2}}
\right\vert \,\frac{d\theta}{2\pi}+C.
\end{align*}

The remaining integral is computed easily by Jensen's formula for the
function $z^2+c$, where
$c=\frac{\sqrt{\lambda_1}-\sqrt{\lambda_2}}{\sqrt{\lambda_1}+\sqrt{\lambda_2}}<1$.
Indeed, it has two zeroes in the unit circle, denoted $a_1$ and $a_2$, and
so:
$$\int_0^{2\pi} \log \left\vert e^{2i\theta}+c\right\vert \,\frac{d\theta}{2\pi} =
\log|\phi(0)| - \log |a_1| -\log|a_2| = \log|c|-2\log\sqrt{|c|} =
0.$$

Recalling \eqref{Enf} and using the relations \eqref{det trace}, we arrive at
 \begin{equation*}
\mathbb{E}(dn_f) = \frac{\bigtriangleup}{2\pi}\,
\frac{1}{2}\log\left(\lambda_1 + \lambda_2 + \sqrt{4
\lambda_1\lambda_2}\right) =  \frac{1}{4\pi} \bigtriangleup\log
\left( K(z,z)+\sqrt{K(z,z)^2 - |K(z,\overline{z})|^2} \right)
\end{equation*}

\subsection{justification of \eqref{Justify} }
We must show that the following integral converges:
$$\frac{1}{2\pi} \int_X |\bigtriangleup h(z)|\cdot \mathbb{E}|\log|f(z)|\,|  dm_2(z). $$

It is enough to prove that $\mathbb{E}|\log|f(z)|\,|$ is bounded on a compact
subset $S$ of the plane. $f(z)$ is a 2-dimensional real Gaussian variable
with parameters noted above, so we get
\begin{align*}
\mathbb{E}|\log |f(z)|| =
\frac{1}{2\pi \sqrt{\det\Sigma}} \iint_{\mathbb{R}^2}
|\log(\sqrt{x^2+y^2})| e^{-\frac{1}{2} (\lambda_1^{-1} x^2
+\lambda_2^{-1} y^2)} dx dy \end{align*}

As before, $\lambda_1, \lambda_2$ are the eigenvalues of $\Sigma$, dependent
on $z$. By another change of variables ($x=u\sqrt{\lambda_1},\; y
=w\sqrt{\lambda_2}$) we get:

$$\mathbb{E}|\log |f(z)|| = \frac{1}{4\pi}\iint_{\mathbb{R}^2} |\log(\lambda_1 u^2 +\lambda_2 w^2)| e^{-(u^2+w^2)/2}du\,dw$$

Fix $z$, and assume $\lambda_1\leq \lambda_2$. Let us split the integral into
two domains: $\Omega_{+} = \{(u,w)\in\mathbb{R}^2: \log(\lambda_1 u^2
+\lambda_2 w^2)\geq 0\}$ and $\Omega_{-} =\{(u,w)\in\mathbb{R}^2:
\log(\lambda_1 u^2 +\lambda_2 w^2)<0\}$.

Then, on $\Omega_{+}$ we estimate $0 < \log(\lambda_1 u^2 +\lambda_2 w^2) <
\log(\lambda_2) + \log(u^2+w^2)$. From here clearly the integral on
$\Omega_{+}$ is bounded by $C_0 +C_1 \log\lambda_2$. By lemma \ref{eigenv},
$\lambda_2 = \frac{1}{2}(K(z,z)+|K(z,\overline{z})|)$ is a continuous
function of $z$, and therefore is bounded on our compact set $S$.

For $(u,w)\in\Omega_{-}$ notice that $0> \log(\lambda_1 u^2+\lambda_2 w^2) >
\log (\lambda_1 u^2)$, therefore:
\begin{align*}
& \iint_{\Omega_{-}} |\log(\lambda_1 u^2 +\lambda_2 w^2)| e^{-(u^2+w^2)/2}du\,dw \leq \\
& \iint_{\Omega_{-}} (|\log(\lambda_1) + \log( u^2)|)
e^{-(u^2+w^2)/2}du\,dw \leq C_0 + C_1 |\log\lambda_1|
\end{align*}

Denote $m = \min\{\lambda_1(z) : z\in S\}$. If $m=0$, this leads to
$K(z_0,z_0) = 0$ for some $z_0\in K$, but this means $z_0$ is a deterministic
zero. Therefore $m>0$ and $|\log\lambda_1|$ is bounded from above.

\section{Proof of Theorem \ref{main} - Horizontal Limiting Measure}\label{sec proofs}

\subsection{Preliminaries}\label{sec prelim}
We present the probability space of our interest, equipped with a
measure-preserving transformation. We explain the notion of ergodicity in
this setup.

The probability space $\Omega$ is a countable product of copies of $\mathbb
C$, with $\mathbb P$ being the product of complex Gaussian measures (one on
each copy). These copies represent the random coefficients in the
construction of $f$: each $\omega=\{a_n\}_{n} \in\Omega$ corresponds to a
function $f_\omega(z) = \sum a_n \phi_n(z)$.
$\mathcal{F}_f$ is the Borel $\sigma$-algebra generated by the basic sets
$\{\omega\in\Omega: f_\omega(z)\in B(w,r)\},$ where $z\in D, r>0$. Here
$B(w,r)=\{p\in\mathbb{C}: |p-w|<r\}$. The group of automorphisms $S_t$ shall
be defined via the correspondence $\omega \leftrightarrow f_\omega$:
$$f_{S_t \omega}(z) = f_\omega (z+t).$$
The map $S_t$ is measure-preserving, since we assumed that $f$ is stationary.
Thus, we will say the random process $f(z)$ is ergodic, if any measurable set
$A\in\mathcal{F}_f$ which is invariant to all translations ($S_t A = A,\:
\forall t\in\R$) is in fact trivial ($\mathbb P A \in \{0,1\}$).

In a similar way, one can define when is the zero-set $Z_f$ ergodic (it is
itself a random point-process in the plane). The space $\Omega$, the measure
$\mathbb{P}$ on it and the automorphisms $\{S_t\}$ are just as before. Now,
the $\sigma$-algebra $\mathcal{F}_{Z_f}$ is generated by the basic sets
$\{\omega\in\Omega: Z_{f_\omega}\cap B(z,r)\neq \emptyset\}$ with $z\in D,
r>0, B(z,r)\subset D$.

\begin{cor}
Ergodicity of $f$ implies ergodicity of $Z_f$.
\end{cor}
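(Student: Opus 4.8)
The plan is to reduce ergodicity of $Z_f$ to ergodicity of $f$ by showing that the $\sigma$-algebra $\mathcal{F}_{Z_f}$ is a sub-$\sigma$-algebra of $\mathcal{F}_f$. Once this inclusion is established the corollary is immediate: the automorphisms $S_t$ act on one and the same probability space $\Omega$, so any set $A\in\mathcal{F}_{Z_f}$ satisfying $S_t A = A$ for all $t$ is simultaneously an $S_t$-invariant member of $\mathcal{F}_f$, whence $\mathbb{P} A\in\{0,1\}$ by the assumed ergodicity of $f$. In other words, ergodicity automatically passes to any sub-$\sigma$-algebra that is invariant under the same transformation group, and the only real content is the inclusion $\mathcal{F}_{Z_f}\subseteq\mathcal{F}_f$.

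To prove this inclusion it suffices to check that each generating set of $\mathcal{F}_{Z_f}$, namely $E_{z,r}=\{\omega : Z_{f_\omega}\cap B(z,r)\neq\emptyset\}$, is $\mathcal{F}_f$-measurable. The idea is that the event ``$f_\omega$ vanishes somewhere in $B(z,r)$'' can be detected purely from the values of $f_\omega$ on a countable dense set, because $f_\omega$ is continuous. Concretely, I would first write $E_{z,r}=\bigcup_{r'\in\mathbb{Q},\,r'<r}\{\omega:\inf_{w\in\overline{B(z,r')}}|f_\omega(w)|=0\}$, using that a zero in the open ball lies in some closed sub-ball $\overline{B(z,r')}\subset B(z,r)$ and that the infimum over the compact set $\overline{B(z,r')}$ is attained, hence equals $0$ exactly when a zero is present. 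Then, by continuity, this infimum coincides with the infimum over a countable dense subset $\{w_k\}$ of $\overline{B(z,r')}$, so that
\[
\{\omega:\inf_{w\in\overline{B(z,r')}}|f_\omega(w)|=0\}=\bigcap_{n\in\N}\bigcup_{k}\{\omega:|f_\omega(w_k)|<1/n\}.
\]
Each event $\{\omega:|f_\omega(w_k)|<1/n\}=\{\omega:f_\omega(w_k)\in B(0,1/n)\}$ is a basic generator of $\mathcal{F}_f$, so $E_{z,r}$ is a countable Boolean combination of such sets and therefore lies in $\mathcal{F}_f$.

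This measurability verification is the only step requiring care; the rest is formal. For consistency I would also remark that the transformations are compatible with the point-process structure: from $f_{S_t\omega}(z)=f_\omega(z+t)$ one gets $Z_{f_{S_t\omega}}=Z_{f_\omega}-t$, so $S_t$ genuinely acts as a horizontal shift of the zero-set. This compatibility is reassuring but is not actually needed for the argument, which uses only that $S_t$ is the same map on $\Omega$ for both $\sigma$-algebras. With $\mathcal{F}_{Z_f}\subseteq\mathcal{F}_f$ in hand, the deduction of the first paragraph completes the proof.
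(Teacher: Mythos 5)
Your proof is correct and takes essentially the same route as the paper: both reduce the corollary to the inclusion $\mathcal{F}_{Z_f}\subseteq\mathcal{F}_f$ and establish it by writing the generating event $\{Z_{f_\omega}\cap B(z,r)\neq\emptyset\}$ as a countable Boolean combination of basic sets $\{f_\omega(p)\in B(0,\tfrac1n)\}$ over a countable dense set of points in slightly smaller sub-balls. The only cosmetic difference is that the paper exhausts $B(z,r)$ by balls of radius $r-\tfrac1m$ where you use rational radii $r'<r$.
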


\begin{proof}
It is enough to prove $\mathcal{F}_{Z_f}\subset \mathcal{F}_f$. Let $A$ be a
countable dense set in $\mathbb C$. Basic sets of $\mathcal{F}_{Z_f}$ can be
written as
$$\{Z_{f_\omega}\cap B(z,r)\neq \emptyset\} =\bigcup_{m\in\mathbb N} \: \bigcap_{n\in\mathbb{N}} \: \bigcup_{p\in A\cap
B(z,r-\frac 1 m)} \left\{f_\omega(p)\in
B\left(0,\frac{1}{n}\right)\right\},$$ which is indeed in
$\mathcal{F}_f$.
\end{proof}

We will use the following classical result:
\begin{thm}
\emph{(Fomin, Grenander, Maruyama)}\label{Mar-Gren}
 A stationary GAF (symmetric or not) is ergodic w.r.t. horizontal
shifts $\{f(z) \rightarrow f(z+t)\}\}_{t\in\mathbb{R}}$ if and only if its
spectral measure $\rho$ has no atoms.
\end{thm}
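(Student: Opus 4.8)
The plan is to pass to the Gaussian Hilbert space generated by the process and to analyze the induced unitary shift through its Wiener--It\^o chaos decomposition. Let $H$ denote the closed complex-linear span in $L^2(\Omega,\mathbb P)$ of the family $\{f(z),\overline{f(z)}:z\in D\}$ (in the symmetric case, of $\{f(x):x\in D\cap\R\}$); this is the \emph{first chaos}, a Gaussian Hilbert space, and the $\sigma$-algebra $\mathcal F_f$ is exactly the one it generates. The shift $S_t$ induces a strongly continuous one-parameter unitary group $U_t$ on $H$ via $U_t f(z)=f(z+t)$, and hence on all of $L^2(\Omega,\mathcal F_f,\mathbb P)$. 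Using the explicit construction of $f$ from an orthonormal basis of $L^2_\rho(\R)$ described in Section~\ref{secsub stat}, one identifies $H$ isometrically with $L^2_\rho(\R)$ so that $U_t$ becomes multiplication by $e^{2\pi i t\lambda}$; that is, the spectral measure of $U_t$ on the first chaos is precisely $\rho$. By the chaos decomposition $L^2(\Omega,\mathcal F_f,\mathbb P)=\bigoplus_{n\geq 0}\cH^{(n)}$, and since the constants constitute all of $\cH^{(0)}$, ergodicity of $f$ is equivalent to $U_t$ having no nonzero invariant vector in $\bigoplus_{n\geq 1}\cH^{(n)}$.

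For the easy direction I would argue by contraposition: suppose $\rho$ has an atom at some $\lambda_0$. Then multiplication by $e^{2\pi i t\lambda}$ has an eigenvector, i.e. there is $\xi\in H$, $\xi\neq 0$, with $U_t\xi=e^{2\pi i t\lambda_0}\xi$ for all $t$. Consequently $\overline\xi$ is an eigenvector with eigenvalue $e^{-2\pi i t\lambda_0}$, so $|\xi|^2=\xi\overline\xi$ satisfies $U_t|\xi|^2=|\xi|^2$. Its non-constant part $|\xi|^2-\E|\xi|^2$ lies in the second chaos $\cH^{(2)}$, is $U_t$-invariant, and is not identically zero (since $\operatorname{var}|\xi|^2>0$ for a nonzero Gaussian $\xi$). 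This exhibits a nonconstant invariant function, so $f$ is not ergodic. The construction is insensitive to whether $\rho$ is symmetric, so it covers both the GAF and symmetric-GAF cases.

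For the converse -- continuity of $\rho$ implies ergodicity -- the key fact is that the maximal spectral type of $U_t$ on the $n$-th chaos $\cH^{(n)}$ is absolutely continuous with respect to the $n$-fold convolution $\rho^{*n}$ (in the symmetric case, with respect to the symmetrized convolution). Since the convolution of a continuous measure with any finite measure is again continuous, each $\rho^{*n}$ with $n\geq 1$ is non-atomic whenever $\rho$ is; hence $U_t$ has purely continuous spectrum on $\bigoplus_{n\geq 1}\cH^{(n)}$. In particular it has no eigenvectors there, and none with eigenvalue identically $1$, so there are no nonconstant invariant functions and $f$ is ergodic (in fact weakly mixing). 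I expect this step -- the identification of the spectral type of the higher chaoses with convolution powers of $\rho$, together with the propagation of continuity through convolution -- to be the main obstacle, as it is exactly where the Gaussian structure enters essentially.

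An alternative, more computational route avoids the full chaos machinery. Ergodicity of a stationary system is equivalent to the Cesàro convergence $\frac1T\int_0^T \mathbb P(A\cap S_t^{-1}B)\,dt\to\mathbb P(A)\mathbb P(B)$, and for a Gaussian process this reduces, via the Hermite expansion of indicator functions, to the vanishing of $\frac1T\int_0^T|r(t)|^2\,dt$ as $T\to\infty$. By Wiener's lemma this Cesàro mean converges to $\sum_\lambda \rho(\{\lambda\})^2$, which is zero precisely when $\rho$ has no atoms. Either route yields the stated dichotomy.
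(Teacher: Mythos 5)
The paper does not actually prove this theorem: it cites Grenander for the classical case of real stationary processes on $\R$ and merely remarks that ``small modifications'' extend the result to GAFs and symmetric GAFs on a strip. Your proposal, by contrast, reconstructs the classical argument in full, and it is essentially correct. The chaos-decomposition route is the standard one: the induced unitary on the $n$-th chaos is the $n$-th symmetric tensor power of its restriction to the first chaos, its maximal spectral type is dominated by the $n$-fold convolution of the first-chaos spectral type, and continuity propagates through convolution, so a continuous $\rho$ forces the absence of invariant vectors outside the constants (indeed weak mixing); conversely an atom of $\rho$ yields an eigenvector $\xi$ in the first chaos whose modulus squared is a nonconstant invariant function. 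One small imprecision: for a (non-symmetric) GAF the first chaos is spanned by both $f(z)$ and $\overline{f(z)}$, which are $L^2$-orthogonal, so the spectral type of $U_t$ there is $\rho+\check\rho$ (with $\check\rho(B)=\rho(-B)$) rather than $\rho$ alone; this changes nothing, since $\rho+\check\rho$ is continuous exactly when $\rho$ is. Your identification of the first chaos with $L^2_\rho(\R)$ via $f(z)\mapsto e^{2\pi i z\lambda}$ also quietly supplies the ``small modification'' the paper alludes to: the exponential moment condition \eqref{cond} guarantees $e^{2\pi i z\lambda}\in L^2_\rho(\R)$ for all $z$ in the strip, so the $\sigma$-algebra and the shift action are the same as for the restriction to $\R$. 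The alternative Ces\`aro argument via Wiener's lemma, $\frac1{T}\int_0^T|r(t)|^2\,dt\to\sum_\lambda\rho(\{\lambda\})^2$, is the route Grenander and Maruyama themselves take and is equally valid; what the chaos approach buys is a cleaner handle on the symmetric case and the stronger conclusion of weak mixing, while the Ces\`aro criterion is more elementary but hides the Hermite-expansion step in the reduction of ergodicity to the second-order condition.
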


This theorem was originally proved for real processes over $\mathbb{R}$ (see
for instance Grenander~\cite{Gren}), but small modifications extend it to a
strip in the complex plane for both types of functions (GAFs and symmetric
GAFs).

\subsection{Existence of the horizontal limiting measure (statement (i)\,)}\label{sec exist}
As above, for $T\ge 1$ let $\nu_{T}$ be the random locally-finite measure on
$(-\Delta, \Delta)$ defined by:
\begin{equation}\label{eq nu_T}
\nu_T(Y)=\nu_{f,T}(Y) := \frac{n_f([0, T) \times Y)}T\,, \qquad Y\subset (-\Delta,
\Delta).
\end{equation}
In this section we show that a.s. the measures $\nu_T$ converge weakly as $T$
tends to infinity.
\medskip
First, we assume that $T$ tends to infinity along positive integers. In this
case, we use the subscript $N$ instead of $T$. By a known theorem in
distribution theory (see for instance, \cite[section~2.1]{Hor}), a sequence
of measures $\nu_N$ converges weakly to some measure if and only if the
sequence of real numbers $\nu_N(h)$ is convergent for every $h\in C_0^\infty(
-\Delta, \Delta )$. It suffices to check whether $\nu_N(h)$ is convergent for
all $h\in M$, where $M\subset C_0^\infty(-\Delta, \Delta)$ is a dense set of
test-functions, and we may choose $M$ to be countable. Given test function
$h\in M$, denote by $A_h$ the event that $\nu_N(h)$ is a convergent sequence
of numbers. To prove our claim it suffices to show $\Pro(A_h)=1$ for every
$h\in M$. Note that $\nu_N(h) = \frac 1 N (X_1+X_2+\dots+X_N)$ where
\begin{equation}\label{eq:X_k}
X_k = X_k(h) = \int \done_{[k, k+1)}(x) h(y)\, dn_f(x,y)
\end{equation}
is a stationary sequence of random variables.

The random variables $X_k$ are integrable. This follows at once from an
Offord-type large deviations estimate~\cite[theorem~3.2.1]{GAF book}:
\begin{thm}[Offord-type estimate] \label{thm off}
Let $f$ be a Gaussian analytic function on a domain $D$. Then for any compact
set $K\subset D$, the number $n_f(K)$  of zeroes of $f$ on $K$ has
exponential tail: there exist positive constants $C$ and $c$ depending on the
covariance function of $f$ and on $K$ such that, for each $\lambda\ge 1$,
\[
\mathbb P \bigl\{ n_f(K) > \lambda \bigr\} < Ce^{-c\lambda}\,.
\]
\end{thm}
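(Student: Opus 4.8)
The plan is to derive the tail bound from Jensen's formula, which converts a count of zeroes into a boundary integral of $\log|f|$. First I would reduce a general compact $K$ to a single disk: choose $r<R$ with $\overline{D(z_0,R)}\subset D$, cover $K$ by finitely many disks $D(z_j,r)$ (say $N$ of them) whose $R$-dilates stay inside $D$, and note $n_f(K)\le\sum_{j=1}^N n_f(D(z_j,r))$. Then $\{n_f(K)>\lambda\}\subset\bigcup_j\{n_f(D(z_j,r))>\lambda/N\}$, so a union bound reduces everything to a single-disk estimate, at the cost of replacing $\lambda$ by $\lambda/N$ and multiplying $C$ by $N$.

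For a fixed disk, Jensen's formula gives, whenever $f(z_0)\neq0$,
\[
n_f(D(z_0,r))\,\log\frac Rr \;\le\; \frac1{2\pi}\int_0^{2\pi}\log\bigl|f(z_0+Re^{i\theta})\bigr|\,d\theta \;-\;\log|f(z_0)|,
\]
since each zero in $D(z_0,r)$ contributes at least $\log(R/r)$ to the Jensen sum. Hence $\{n_f(D(z_0,r))>\mu\}$ forces the right-hand side to exceed $\mu\log(R/r)$, which in turn forces at least one of the two events
\[
\Bigl\{\tfrac1{2\pi}\textstyle\int_0^{2\pi}\log^+|f(z_0+Re^{i\theta})|\,d\theta>\tfrac{\mu}{2}\log\tfrac Rr\Bigr\}, \qquad \bigl\{\log|f(z_0)|<-\tfrac{\mu}{2}\log\tfrac Rr\bigr\}.
\]
It then remains to bound the probability of each.

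For the first event, bound the integrand by $\log^+ M$ with $M=\sup_{|z-z_0|=R}|f(z)|$; since $M$ is the supremum of a Gaussian field it satisfies a sub-Gaussian tail $\Pro(M>t)\le Ce^{-ct^2}$ for large $t$ (Borell--TIS, or Dudley's entropy bound, with constants uniform over the finite collection of circles). The event then requires $M>(R/r)^{\mu/2}$, whose probability is super-exponentially small in $\mu$, hence negligible against $Ce^{-c\mu}$. For the second event I use that $f(z_0)$ is a nondegenerate complex Gaussian — its variance $K(z_0,z_0)$ is strictly positive because $f$ has no deterministic zeroes — so the standard two-dimensional small-ball estimate gives $\Pro(|f(z_0)|<\epsilon)\le C'\epsilon^2$. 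With $\epsilon=(r/R)^{\mu/2}$ this yields $\Pro(\log|f(z_0)|<-\tfrac\mu2\log\tfrac Rr)\le C'(r/R)^{\mu}=C'e^{-\mu\log(R/r)}$, the desired exponential decay. Combining the two bounds and translating $\mu=\lambda/N$ back produces constants $C,c>0$ of the required form.

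The step needing the most care is the uniform Gaussian sup-bound for $M$: one must control the covariance (equivalently the canonical metric) of $f$ on each circle so that Borell--TIS applies with constants depending only on $K$ and the covariance. This is a softness issue only, since that event contributes a super-exponentially small term; the quantitative heart that actually fixes the rate $c=\log(R/r)$ is the small-ball probability for $|f(z_0)|$, whose only genuine input is nondegeneracy of $f$ at each point — exactly what the standing assumption on $\{\phi_n\}$ guarantees.
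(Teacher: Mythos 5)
Your proof is correct, but it takes a genuinely different route from the paper's. The paper imports this statement from \cite[Theorem~3.2.1]{GAF book} and, for the variants it actually proves (Propositions \ref{prop offT} and \ref{prop off2} in Section \ref{sec offord}), follows Sodin's argument: dominate $n_f(K)$ by $\frac{1}{2\pi}\int \Delta\phi\,\log|f|\,dm_2$ for a smooth bump $\phi$ equal to $1$ on $K$, multiply by the indicator $\chi_A$ of the large-deviation event $A=\{n_f(K)>\lambda\}$ itself, and invoke a pointwise Gaussian lemma (Lemma \ref{lem log bound}) giving $|\E(\chi_A\log|f(z)|)|\le p\,(c_1-c_2\log p)$ with $p=\Pro(A)$; comparing with $\lambda p\le \E(\chi_A\, n_f(K))$ yields $\lambda \le c_3(c_1-c_2\log p)$ and hence the exponential tail. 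Your Jensen-formula decomposition instead splits the bad event into ``$|f|$ huge on a circle'' and ``$|f|$ tiny at the centre'', handled by Borell--TIS and a small-ball estimate respectively. Both arguments are sound. The paper's method needs no sup-norm concentration input and transfers verbatim to the non-centered fields $g+\eta_{\alpha,\beta}$ required later (its key lemma allows an arbitrary mean $\mu$); yours is softer on the Gaussian-process side, since the quantitative heart is only the one-point small-ball bound, and it makes the decay rate $c=\log(R/r)/N$ geometrically explicit. Two points worth making explicit in a full write-up: nondegeneracy at the finitely many centres $z_j$ (guaranteed by the standing assumption that $K(z,z)>0$ throughout $D$) is what makes your small-ball constant uniform; and since the paper also applies this estimate to symmetric GAFs, for which $f(z_0)$ degenerates to a one-dimensional real Gaussian on the real axis, the small-ball probability there is only $O(\epsilon)$ rather than $O(\epsilon^2)$ --- still exponential decay, just with a halved rate.
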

Therefore we can apply Birkhoff theorem \cite[chapter~7]{CrLead}. It yields
that the limit $\frac 1 N (X_1+X_2+\dots+X_N)$ almost surely exists, and so
$\Pro(A_h)=1$. This completes the proof of the weak convergence of the
sequence $\nu_N$.

\medskip
Now, we consider the general case in statement (i). Let $T\ge 1$, and let
$N=[T]$ be the integer part of $T$. Then
\[
\nu_T(h) = \frac{N}{T}\, \nu_N(h) + \underbrace{\frac1 T \int
\done_{[N, T)}(x) h(y)\, dn_f(x, y)}_{=: R_T(h)}\,.
\]
We will show that a.s. the second term on the right-hand side converges to
zero for all bounded compactly supported test functions $h$. It suffices to
prove this for all bounded test functions supported by an interval
$[-\Delta_1, \Delta_1]$ with an arbitrary $\Delta_1<\Delta$. We have
\[
|R_T(h)| \le \frac{\| h \|_\infty}{T}\, n_f \bigl( [N, N+1) \times
[-\Delta_1, \Delta_1] \bigr)\,.
\]
Employing the Offord-type estimate with $K=[0, 1]\times [-\Delta_1,
\Delta_1]$ and using translation-invariance of the zero distribution of $f$,
we see that for each $\varepsilon>0$,
\[
\mathbb P \bigl\{\, n_f ( [N, N+1]\times [-\Delta_1, \Delta_1] ) \ge
\varepsilon T \,\bigr\} = \mathbb P \bigl\{\, n_f ( [0, 1] \times
[-\Delta_1, \Delta_1] )  \ge \varepsilon T \,\bigr\} <
Ce^{-c\varepsilon N}\,.
\]
Hence, for each $M\in\mathbb N$,
\[
\mathbb P \bigl\{ \limsup_{T\to\infty} |R_T(h)| \ge \varepsilon\,
\|h\|_\infty \bigr\} \le \sum_{M}^\infty C e^{-c\varepsilon N} = C
(1-e^{-c\varepsilon})^{-1} e^{-c\varepsilon M}\, ,
\]
and we conclude that a.s.
\[
\lim_{T\to\infty} R_T(h) = 0\,
\]
for all smooth compactly supported test functions $h$. This completes the
proof of statement (i) in theorem~\ref{main}. \hfill $\Box$

\subsection{Non-random limiting measure (statement (iia),(iii)\,)}
Here we will prove that if the spectral measure $\rho_f$ has no atoms, then
the horizontal mean zero-counting-measure $\nu_f$ is not random, which is a
half of statement~(ii) (the case when $\rho_f$ is a single atom for GAFs or
two symmetric atoms for symmetric GAFs was already discussed in
remark~\ref{rmk atom}). We then compute the limit $\nu_f$, which is
statement~(iii).

Assume the spectral measure $\rho$ is continuous. From the
Fomin-Maruyama-Gre\-nan\-der Theorem we get that $f$ is ergodic, and so is
$Z_f$. Using the notation introduced in the proof of statement (i), we get
that for any smooth test function $h$, the stationary sequence of random
variables $X_k (h)$ introduced in~\eqref{eq:X_k} is ergodic. In this case the
Birkhoff ergodic theorem asserts that, a.s.,
\[
\lim_{N\to\infty} \nu_N (h) = \lim_{N\to\infty} \frac1{N}
\sum_{k=0}^{N-1} X_k (h) = \mathbb E X_0 (h)\,.
\]
Therefore, the horizontal mean zero-counting-measure is non-random, and
equals
\[
\mathbb E X_0 (h)= \mathbb E \int \done_{[0, 1)}(x) h(y)\, dn_f(x,y)
=\int \done_{[0, 1)}(x) h(y)\,\E dn_f(x,y) \,,
\]
where $\E d n_f(x,y)$ is the mean zero-counting measure. For a GAF, we
compute it directly by Edelman-Kostlan formula \eqref{eq EK}; while for a
symmetric GAF we use the formula \eqref{eq-Rdensity} (in Theorem
\ref{thm-Rdensity}). As before, denote $\psi(y) =
\displaystyle\int_{-\infty}^\infty e^{-4\pi y\lambda}d\rho(\lambda)$. Note
that
\begin{align*}
&K(z,z) = \int e^ {2\pi i \cdot 2yt} d\rho(t) = \psi(y) \\
&K(z,\overline{z}) = \int e^{2\pi i (z-\overline{z})t}d\rho(t) =
\int d\rho(t) = \psi(0),
 \end{align*}
where $z=x+iy$. Putting this into \eqref{eq-Rdensity}, we get the first
intensity of zeroes:
$$\mathbb{E} n_f = \frac{1}{4\pi}\frac {d^2}{dy^2} \log \left( \psi(y)+\sqrt{\psi(y)^2-\psi(0)^2}\right) =
\frac{1}{4\pi} \frac d{dy} \frac {\psi'(y)}
{\sqrt{\psi(y)^2-\psi(0)^2}}.$$
For any $y\neq 0$, this is a derivative in the functional sense, which equals
$S(y)$. At $y=0$, the function is not defined; but the limits
$$\displaystyle\lim_{y\rightarrow 0+}\frac {\psi'(y)}
{4\pi \sqrt{\psi(y)^2-\psi(0)^2}}= -\lim_{y\rightarrow 0-}
\frac {\psi'(y)}
{4\pi \sqrt{\psi(y)^2-\psi(0)^2}} = A$$
exist. This follows from $\frac {\psi'(y)} {\sqrt{\psi(y)^2-\psi(0)^2}}$
being an odd function, increasing in $y\in(0,\Delta)$. So, in order to compute $\E n_f$ we take
the required derivative in the distribution sense, which yields the continuous
point-wise derivative $S(y)$ (for $y\neq 0$) plus an atom of size $2A$ at 0.

In order to compute $A$ let us write this limit again, and apply
L'H\^{o}pital's rule: 
\begin{equation*}
4\pi A = \displaystyle\lim_{y\rightarrow 0+} \frac {\psi'(y)}
{\sqrt{\psi(y)^2-\psi(0)^2}} = \lim_{y\rightarrow 0+} \frac
{\psi''(y)\cdot \sqrt{\psi(y)^2-\psi(0)^2}}
{\psi(y)\cdot\psi'(y)}=(4\pi)^2 \mathcal{E}_2 \frac{1}{4\pi A}
\end{equation*}
where $\mathcal{E}_2=\displaystyle \frac {\int_{-\infty}^\infty \lambda^2
d\rho(\lambda)}{\int_{-\infty}^\infty d\rho(\lambda)}$ is the ratio between
the second and the zero moments of the spectral measure. We conclude that $A
= \sqrt{\mathcal{E}_2}$, and therefore the atom has twice this size.

\subsection{Random limiting measure (statement (iib))}
Our goal is to prove the second half of (ii). We present the proof for
symmetric GAFs, since it is slightly more involved. We assume that the
spectral measure has the form
$$\rho_f = c\delta_q+c\delta_{-q}+\mu,$$ where $\mu$ is a
non-trivial measure. Our goal is to show that the horizontal mean
zero-counting measure of some segment $\nu_f(a,b)$ is a non-constant random
variable. We may assume that $c=1$ and $q=1$ (if $q=0$ the analysis is
easier).

\medskip
Since $L^2_\rho(\R)$ is the direct sum of $L^2_{\delta_1+\delta_{-1}}(\R)$
and $L^2_\mu(\R)$, a union of any orthonormal bases in these subspaces is an
orthonormal basis in $L^2_\rho(\R)$. By the remark in end of section
\ref{secsub stat}, applying Fourier Transform on this union we shall get a
basis $\phi_n(z)$ from which a GAF with spectral measure $\rho$ can be
constructed. This gives the representation
$$f(z) = g(z) + \,\alpha\cos(2\pi
z)+\beta\sin(2\pi z) ,$$ where $g(z)$ is a symmetric GAF with
spectral measure $\mu$ and $\alpha,\, \beta\sim\mathcal N_\R(0,1)$ are
real Gaussians, independent of each other and of $g$. We write for short
$\eta(z)=\eta_{\alpha,\beta}(z)=\alpha\cos(2\pi z)+\beta\sin(2\pi z)$.

Fix $a,b\in(-\Delta,\Delta)$. Denote the number of zeroes of $f$ in
$[0,T]\times[a,b)$ by $N_T(g,\alpha, \beta) = \#\{z\in [0,T]\times[a,b):
g(z)=-\eta_{\alpha,\beta}(z) \}$.

Assume to the contrary that there is some constant $C$ (depending on $a$ and
$b$) such that
\begin{equation}\label{eq RlimN_T}
\text{a.s. in } \alpha,\beta, \:\: \ds \exists \lim_{T\to\infty}
\frac {N_T(g,\alpha,\beta)} {T} = C.
\end{equation}

Here we denote $\Pro_g$ and $\E_g$ the probability and expectation (resp.)
conditioned on $\alpha,\beta$. We claim that:

\begin{equation}\label{eq RGAF Elim}
\ds \E_g \lim_{T\to\infty} \frac {N_T(g,\alpha,\beta)}{T} =
\lim_{T\to\infty} \frac {\E_g N_T(g,\alpha,\beta)}{T}.
\end{equation}

This exchange is justified by the dominated convergence principle, as seen by
the following Offord-type estimate:

\begin{prop}\label{prop offT}
Let $g$ be a symmetric stationary GAF on a horizontal strip, $\alpha$ and
$\beta$ are fixed complex numbers. There exist positive constants $C$ and $c$
such that:
\begin{equation*}
\ds \sup_{T\ge 1} \Pro_g\left(\frac{N_T(g,\alpha,\beta)}{T}>s\right)
<C e^{-cs},
\end{equation*}
\end{prop}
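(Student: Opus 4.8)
The plan is to prove a uniform-in-$T$ exponential tail bound for $N_T(g,\alpha,\beta)/T$, the normalized count of zeroes of $g+\eta_{\alpha,\beta}$ in the strip $[0,T]\times[a,b)$. The essential difficulty is that Theorem~\ref{thm off} gives exponential tails only for a \emph{fixed} compact set, and here the horizontal window grows with $T$, so a direct application fails. My strategy is to tile the long box by $N+1=[T]+1$ unit boxes and exploit the additivity of the zero-counting measure together with stationarity, reducing the large-deviation question to many independent-looking copies of a fixed-box estimate. A subtlety is that $g+\eta_{\alpha,\beta}$ is not itself a centered Gaussian analytic function (it has the deterministic drift $\eta_{\alpha,\beta}$), so I must verify that an Offord-type bound still applies to $g$ with a shifted target, or reformulate the count as zeroes of a genuine GAF.

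\textbf{Step 1 (reduce to a fixed-box count).} Write $K=[0,1]\times[a,b]$ and, using additivity of $n_g$, decompose
\[
N_T(g,\alpha,\beta) \le \sum_{k=0}^{N} n_{g+\eta}\bigl([k,k+1)\times[a,b)\bigr),
\]
where $N=[T]$ and $\eta=\eta_{\alpha,\beta}$. By the horizontal stationarity of $g$ (and $2\pi$-periodicity of $\eta$ only entering the covariance through $g$), each summand has the same distribution as $n_{g+\eta_k}(K)$ for a shifted drift $\eta_k(z)=\eta(z+k)$; since $\{\eta_k\}$ range over a compact family of entire functions of controlled growth on $K$, I expect a single pair of constants $C,c$ to work uniformly.

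\textbf{Step 2 (Offord bound for a GAF with deterministic drift).} I would establish that for any fixed $\alpha,\beta$, the count $n_{g+\eta}(K)$ still has an exponential tail $\Pro_g\{n_{g+\eta}(K)>\lambda\}<Ce^{-c\lambda}$. The cleanest route is to note that the zeroes of $g+\eta$ in $K$ coincide with the zeroes of the centered Gaussian field $\tilde g(z):=g(z)+\eta(z)$, whose covariance kernel equals that of $g$; the deterministic shift $\eta$ changes only the mean. The large-deviation proof of Theorem~\ref{thm off} rests on Jensen's formula bounding $n_{\tilde g}(K)$ by $\log\max|\tilde g|-\log|\tilde g(z_0)|$ on a slightly larger disk, and both the upper and lower log-terms admit Gaussian tail control that is insensitive to adding a bounded deterministic function on the compact set (the supremum is controlled by Borell–TIS, the lower value by anti-concentration of a nondegenerate Gaussian). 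This yields constants depending only on the covariance of $g$, on $K$, and on $\sup_K|\eta|$, which is bounded uniformly as required.

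\textbf{Step 3 (sum the tails).} Combining Steps~1 and~2 with a union bound,
\[
\Pro_g\Bigl(\frac{N_T}{T}>s\Bigr)
\le (N+1)\,\Pro_g\bigl(n_{g+\eta}(K)>\tfrac{sT}{N+1}\bigr)
\le (N+1)\,C\,e^{-c\,sT/(N+1)}.
\]
Since $N+1\le T+1\le 2T$ for $T\ge 1$, the exponent is $\ge c s/2$, and the prefactor $2CT$ is absorbed by halving the rate at the cost of a larger constant, giving $\sup_{T\ge1}\Pro_g(N_T/T>s)<C'e^{-c's}$. The main obstacle is genuinely Step~2—confirming that the Offord estimate survives the additive drift with constants uniform over the compact family $\{\eta_{\alpha,\beta}(\cdot+k)\}$—whereas the tiling and summation are routine once that is in hand.
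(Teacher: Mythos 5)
There is a genuine gap in Step 3, and it is fatal to the argument as written. Your union bound gives
\[
\Pro_g\Bigl(\frac{N_T}{T}>s\Bigr)\;\le\;(N+1)\,C\,e^{-c\,sT/(N+1)}\;\le\;2CT\,e^{-cs/2},
\]
because $T/(N+1)$ is pinned between $1/2$ and $1$ and does \emph{not} grow with $T$. The prefactor $2CT$ therefore multiplies a quantity that decays in $s$ but not in $T$; for fixed $s$ the right-hand side tends to infinity as $T\to\infty$, so the supremum over $T\ge 1$ is not controlled. It cannot be ``absorbed by halving the rate'': $Te^{-cs/2}\le C'e^{-cs/4}$ would require $T\le C'e^{cs/4}$, which fails for all $T$ beyond an $s$-dependent threshold. (The tiling idea is salvageable, but not by a union bound: since each block count $Y_k$ has an exponential moment, a Chernoff bound combined with the generalized H\"older inequality $\E\exp\bigl(\lambda\sum_k Y_k/(N+1)\bigr)\le\prod_k\bigl(\E e^{\lambda Y_k}\bigr)^{1/(N+1)}$ gives a genuinely $T$-uniform estimate without independence. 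That, however, is not what you wrote.)

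The paper avoids tiling altogether and argues globally. It picks a test function $\phi_T$ equal to $1$ on $[0,T]\times[a,b)$ with $\norm{\Delta\phi_T}_{L^1}=O(T)$, bounds $N_T\le\frac{1}{2\pi}\int\Delta\phi_T\,\log|f|\,dm_2$, multiplies by $\chi_{A_T}$ where $A_T=\{N_T>sT\}$ has probability $p$, and invokes Lemma~\ref{lem log bound}: for a two-dimensional Gaussian with mean $\mu$ (which is exactly how the deterministic drift $\eta_{\alpha,\beta}$ enters) and any event $E$ of probability $p$, $|\E(\chi_E\log|\cdot|)|\le p(c_1-c_2\log p)$. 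This yields $sTp\le O(T)\cdot p(c_1-c_2\log p)$; the factors of $T$ and of $p$ cancel, leaving $s\le c_1'-c_2'\log p$, i.e.\ $p\le Ce^{-cs}$ uniformly in $T$. Your Step 2 (a fixed-box Offord estimate surviving a bounded additive drift) is a reasonable ingredient and is essentially what the paper's Proposition~\ref{prop off2} asserts, but without replacing the union bound in Step 3 the proposition is not proved. A minor point: $\eta_{\alpha,\beta}(z)=\alpha\cos(2\pi z)+\beta\sin(2\pi z)$ is $1$-periodic, not $2\pi$-periodic, which is why the integer-shifted blocks are exactly identically distributed.
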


This fact is proved in section \ref{sec offord} below.

Next we claim that the RHS of \eqref{eq RGAF Elim} is just $\E_g
N_1(g,\alpha,\beta)$. To see this, notice that for integer $T$,
$N_T(g,\alpha,\beta)$ is the sum of $T$ identically distributed random
variables, all distributed like $N_1(g,\alpha,\beta)$. This follows
immediately from stationarity of $g$ and from $1$-periodicity of
$\eta_{\alpha,\beta}(z)$. Therefore for integer $T$,
$$ \frac 1 T \,{\E_g N_T(g,\alpha,\beta)}=\E_g
N_1(g,\alpha,\beta).$$
Denote $M =\lfloor T \rfloor$. Since $\E_g N_{[M, T]}:=\E \#\{z\in
[M,T]\times[a,b): g(z)=-\eta_{\alpha,\beta}(z) \} \leq \E_g N_1<\infty$, it
follows that for non-integer $T$,

\begin{align}\label{3}
&\lim_{T\to\infty} \frac {\E_g N_T(g,\alpha,\beta)}{T} =
\lim_{T\to\infty}\left( \frac {\E_g N_{M}}{M}\cdot \frac
 {M}{T} + \frac {\E_g N_{[M, T]}}
 T\right) = \E_g N_1(g,\alpha,\beta).
\end{align}

Combining \eqref{eq RlimN_T}, \eqref{eq RGAF Elim} and \eqref{3} we have:
\begin{equation}\label{eq E(g,a,b)=C}
\text{a.s. in } \alpha,\beta, \:\: \E_g N_1(g,\alpha,\beta) = C.
\end{equation}

We divide the rest of our argument into three claims.
\begin{claim}\label{clm Rcont}
$\E_g N_1(g,\alpha,\beta)$ is continuous in $(\alpha,\beta)\in \mathbb R^2$.
\end{claim}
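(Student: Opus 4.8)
The plan is to show that $\E_g N_1(g,\alpha,\beta)$, the expected number of zeroes in the fixed box $[0,1]\times[a,b)$ of the function $g(z)+\eta_{\alpha,\beta}(z)$, depends continuously on the parameters $(\alpha,\beta)$. The natural strategy is to produce an explicit integral formula for this expectation and then justify passing a limit inside the integral. First I would invoke Theorem~\ref{thm-Rdensity}: for each fixed $(\alpha,\beta)$ the function $z\mapsto g(z)+\eta_{\alpha,\beta}(z)$ is a symmetric (non-stationary, because of the deterministic additive term) Gaussian analytic function, so its expected zero-counting measure is governed by the Edelman--Kostlan-type formula \eqref{eq-Rdensity}. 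I would therefore write $\E_g N_1(g,\alpha,\beta)$ as an integral over $[0,1]\times[a,b)$ of a density expressed through the relevant covariance quantities $K(z,z)$, $K(z,\bar z)$ together with the means $\E_g(g+\eta)(z)=\eta_{\alpha,\beta}(z)$ arising from the shift by the deterministic term.

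The main obstacle will be dealing with the deterministic mean: adding $\eta_{\alpha,\beta}$ makes the process have nonzero mean, so Theorem~\ref{thm-Rdensity} must be applied in its form for Gaussian fields with a mean (or, equivalently, I would reduce to the centred case by working with the bivariate real Gaussian vector $(u(z)+\operatorname{Re}\eta(z),\,v(z)+\operatorname{Im}\eta(z))$ and redo the computation of $\E\log|g+\eta|$ as in Lemma~\ref{eigenv} and the integral \eqref{eq-myint}, now with a nonzero centre). The key point I expect to exploit is that the resulting density is a jointly continuous function of $z$ and of the mean vector, hence of $(\alpha,\beta)$, away from the exceptional locus where the covariance matrix $\Sigma$ degenerates or where the mean places an atom of zeroes; since $g$ has no deterministic zeroes, $\Sigma$ is nondegenerate on the compact box and the integrand stays locally bounded.

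With the formula in hand, the continuity proof reduces to a dominated-convergence argument. I would fix a convergent sequence $(\alpha_n,\beta_n)\to(\alpha,\beta)$ and show that the integrand converges pointwise (clear from the continuity of the density in the mean vector) and is dominated by an integrable bound uniform in $n$. The domination is where the earlier work pays off: Proposition~\ref{prop offT} gives a uniform-in-$T$ exponential tail bound for $N_T(g,\alpha,\beta)/T$, and the same reasoning restricted to the single box $[0,1]\times[a,b)$ (which is exactly the $T=1$ instance) yields $\E_g N_1(g,\alpha,\beta)\le C$ with $C$ independent of $(\alpha,\beta)$ ranging over a bounded set. Thus the family of expectations is uniformly bounded on compacta, and combined with pointwise convergence of the densities this gives continuity by dominated convergence.

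Alternatively, and perhaps more cleanly, I would avoid the explicit density altogether and argue directly from the representation $\E_g N_1(g,\alpha,\beta)=\E_g\bigl[\#\{z\in[0,1]\times[a,b):g(z)=-\eta_{\alpha,\beta}(z)\}\bigr]$. Here the continuity in $(\alpha,\beta)$ is intuitively forced by the fact that $\eta_{\alpha,\beta}$ depends continuously on the parameters and the zero count of an analytic function varies continuously under small perturbations provided there are a.s.\ no zeroes on the boundary of the box; the latter holds because, for each fixed $(\alpha,\beta)$, the probability that $g+\eta_{\alpha,\beta}$ vanishes on the measure-zero boundary curve is zero. I expect the cleanest writeup to combine the two viewpoints: use the explicit formula to get a manifestly continuous integrand, and use the Offord-type bound of Proposition~\ref{prop offT} to license the limit interchange.
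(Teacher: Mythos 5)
Your second (``alternative'') argument is essentially the paper's own proof: the paper fixes $\beta_0$, notes that a.s.\ there is no solution of $g(z)=-\eta_{\alpha_0,\beta_0}(z)$ on the boundary of $[0,1]\times[a,b]$, so that $N_1(g,\alpha,\beta_0)\to N_1(g,\alpha_0,\beta_0)$ almost surely (Hurwitz), and then passes the limit through $\E_g N_1=\int \Pro_g(N_1>s)\,ds$ using the uniform exponential tail of Proposition~\ref{prop off2}. Your sequential formulation $(\alpha_n,\beta_n)\to(\alpha,\beta)$ even handles joint continuity directly, where the paper's write-up only varies one parameter at a time; and your observation that the Offord constants can be taken uniform over a bounded set of $(\alpha,\beta)$ is exactly the point needed to license the interchange. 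Your primary route --- writing $\E_g N_1$ as an integral of an explicit first-intensity density and invoking dominated convergence --- is genuinely different from the paper and would also work, but it is heavier than necessary: Theorem~\ref{thm-Rdensity} as stated applies only to \emph{centered} symmetric GAFs, whereas $g+\eta_{\alpha,\beta}$ has the deterministic mean $\eta_{\alpha,\beta}(z)$, so you would first have to rederive the Edelman--Kostlan-type formula for a bivariate real Gaussian with nonzero mean (redoing the computation following \eqref{eq-myint} with a shifted centre), a nontrivial extension the paper deliberately avoids. The soft argument buys brevity and needs only the tail bound; the density argument buys an explicit, manifestly continuous formula at the cost of proving a stronger intensity theorem. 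Either way the claim follows, so there is no gap --- just note that the citation for the domination should be Proposition~\ref{prop off2} (fixed compact set, which is what the claim needs) rather than Proposition~\ref{prop offT}, although for the box $[0,1]\times[a,b)$ the two coincide as you observe.
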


\begin{claim}\label{clm Rsol decr}
For any compact set $K\subset D$, let $N(g,\alpha,\beta;K)$ be the number of
solutions to $g(z)=-\eta_{\alpha,\beta}(z)$ with $z\in K$. Then $$\E_g
N(g,\alpha,\beta;K)\to n_{\cos(2\pi z)}(K)\:\:\text{ as }\:\:|\alpha|\to \infty.$$
Here $n_{\cos(2\pi z)}$ is the zero-counting measure of $\cos(2\pi z)$.
\end{claim}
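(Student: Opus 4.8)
The plan is to separate a pointwise-in-$g$ statement about where the zeros sit from the probabilistic step that exchanges limit and expectation. The zero set of $f=g+\eta_{\alpha,\beta}$ coincides with that of the normalised function $\tilde f_\alpha:=f/\alpha$, and for a \emph{fixed} realisation of $g$ one has, as $|\alpha|\to\infty$ with $\beta$ fixed,
\[
\tilde f_\alpha(z)=\frac{g(z)}{\alpha}+\cos(2\pi z)+\frac{\beta}{\alpha}\sin(2\pi z)\longrightarrow \cos(2\pi z)
\]
uniformly on compact subsets of $D$. Since $\cos(2\pi z)\not\equiv 0$, Hurwitz's theorem applies: provided $\cos(2\pi z)$ has no zero on $\partial K$, the number of zeros of $\tilde f_\alpha$ in $K$ equals $n_{\cos(2\pi z)}(K)$ once $|\alpha|$ is large enough. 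Hence, almost surely in $g$, $N(g,\alpha,\beta;K)\to n_{\cos(2\pi z)}(K)$. The zeros of $\cos(2\pi z)$ form the discrete real set $\{\tfrac14+\tfrac{k}{2}\}_{k\in\mathbb Z}$, so the condition on $\partial K$ is either automatic (e.g.\ for the rectangles $K=[0,1]\times[a,b]$ that arise in the application) or arranged by an arbitrarily small change of $K$.

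Next I would promote this almost-sure convergence to convergence of the $\E_g$-expectations. As the limit is the \emph{constant} $n_{\cos(2\pi z)}(K)$, by Vitali's theorem it suffices to prove that $\{N(g,\alpha,\beta;K)\}_{|\alpha|\ge\alpha_0}$ is uniformly integrable under $\Pro_g$. I would obtain this from a Jensen-type bound on concentric circles: choosing $z_0,r,R$ with $K\subset D(z_0,r)$ and $\overline{D(z_0,R)}\subset D$, Jensen's formula gives
\[
N(g,\alpha,\beta;K)\,\log\tfrac{R}{r}\ \le\ \int_0^{2\pi}\log\bigl|\tilde f_\alpha(z_0+Re^{i\theta})\bigr|\,\frac{d\theta}{2\pi}+\int_0^{2\pi}\bigl|\log|\tilde f_\alpha(z_0+re^{i\theta})|\bigr|\,\frac{d\theta}{2\pi}.
\]
The outer term is harmless after normalisation, since $\max_{\overline{D(z_0,R)}}|\tilde f_\alpha|\le \max_{\overline{D(z_0,R)}}|g|+C$ for $|\alpha|\ge1$ and $\log^+\max|g|$ has finite expectation and very light (Gaussian-supremum) tails, uniformly in $\alpha$.

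The inner term is where the difficulty lies, and it dictates the choice of $r$. Normalising by $\alpha$ shrinks the covariance of $\tilde f_\alpha$ to order $\alpha^{-2}$, so $\tilde f_\alpha$ concentrates on $\cos(2\pi z)$; near a zero of the cosine the quantity $\E_g\bigl|\log|\tilde f_\alpha|\bigr|$ would grow like $\log|\alpha|$ and destroy uniform integrability. This is precisely why I would choose the inner circle $\{|w-z_0|=r\}$ to \emph{avoid} the zeros of $\cos(2\pi z)$, so that $|\cos(2\pi w)|\ge\delta>0$ there. Then, for $w$ on the circle, $\tilde f_\alpha(w)$ is a bivariate real Gaussian of covariance $\le\Sigma(w)=O(1)$ whose mean $\cos(2\pi w)+\tfrac{\beta}{\alpha}\sin(2\pi w)$ has modulus $\ge\delta/2$ for $|\alpha|$ large; the offending event $\{|\tilde f_\alpha(w)|<\delta/4\}$ forces $|g(w)|\gtrsim\delta|\alpha|$ and so has probability at most $\exp(-c\,\delta^2\alpha^2)$. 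This more than compensates the logarithmic singularity and keeps $\E_g\bigl|\log|\tilde f_\alpha(w)|\bigr|$, and likewise its second moment, bounded uniformly in $|\alpha|\ge\alpha_0$ and in $w$ on the circle. Feeding these bounds into the Jensen inequality and applying Cauchy--Schwarz over the circle yields $\sup_{|\alpha|\ge\alpha_0}\E_g N(g,\alpha,\beta;K)^2<\infty$, hence uniform integrability, and the interchange of limit and expectation follows.

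The main obstacle is exactly this uniform integrability as $|\alpha|\to\infty$: normalising by $\alpha$ is forced in order to tame the maximum modulus, but it simultaneously concentrates $\tilde f_\alpha$ onto $\cos(2\pi z)$ and threatens a logarithmic blow-up of $\E_g|\log|\tilde f_\alpha||$ at the zeros of the cosine. Steering the Jensen circle clear of those zeros, where the small-ball probability decays like $e^{-c\alpha^2}$, is the crux of the argument.
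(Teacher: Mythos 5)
Your proof is correct, and its first half coincides with the paper's: both normalise $f=g+\eta_{\alpha,\beta}$ by $\alpha$, observe that $h_\alpha=(g+\beta\sin(2\pi\cdot))/\alpha+\cos(2\pi\cdot)$ has the same zeros as $f$ and converges locally uniformly to $\cos(2\pi z)$ for each fixed realisation of $g$, and invoke Hurwitz's theorem to get $N(g,\alpha,\beta;K)\to n_{\cos(2\pi z)}(K)$ almost surely. Where you genuinely diverge is the passage from almost-sure to moment convergence. The paper disposes of this in one line by citing its Offord-type estimate (Proposition \ref{prop off2}), whose tail bound $\Pro(N>s)\le Ce^{-cs}$ is stated for \emph{fixed} $\alpha,\beta$; as you correctly sense, the constants that the paper's method (Lemma \ref{lem log bound}) produces depend on the mean $|\mu|^2=|\eta_{\alpha,\beta}|^2$ and on the eigenvalues of the covariance, and these degenerate as $|\alpha|\to\infty$ whether one applies the estimate to $f$ or to the normalised $h_\alpha$, so the uniformity needed for dominated convergence along $|\alpha|\to\infty$ is left implicit there. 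Your replacement --- uniform integrability via Jensen's formula on a circle chosen to avoid the real zeros $\{\tfrac14+\tfrac k2\}$ of $\cos(2\pi z)$, where the small-ball probability of $\tilde f_\alpha$ decays like $e^{-c\delta^2\alpha^2}$ and hence dominates both the $\alpha^2$ growth of the Gaussian density and the potential $\log|\alpha|$ blow-up of $\E_g|\log|\tilde f_\alpha||$ --- supplies exactly that uniformity in $\alpha$ and is therefore more self-contained, at the price of a longer argument and the mild geometric care needed to keep the inner Jensen circle off the zero set of the cosine. In short: same Hurwitz core, but a different (and in this respect more careful) mechanism for exchanging the limit with $\E_g$.
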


Relying on the last claim and \eqref{eq E(g,a,b)=C}, we get that
\begin{equation}\label{eq-sinlike measure}
\E_g N_1(g,\alpha,\beta) = 2\delta_0([a,b))
\end{equation}
for almost all $\alpha,\beta$. Since $\E_g N_1(g,\alpha,\beta)$ is continuous
in $\alpha,\beta$, \eqref{eq-sinlike measure} is true \emph{for all}
$\alpha,\beta$, and in particular for $(\alpha,\beta)=(0,0)$. The following
claim asserts this happens only for one family of symmetric GAFs:

\begin{claim}\label{clm Rno zeroes}
If for $-\Delta<a<0<b<\Delta$,
$$\E_g N_1(g,0,0) = \E n_g([0,1)\times[a,b) = 2\delta_0([a,b)),$$
then the spectral measure of $g$ is $\frac 1 2 (\delta_1+\delta_{-1})$ (up to
a constant multiplier).
\end{claim}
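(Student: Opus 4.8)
The plan is to compute the expected zero-counting measure of $g$ from the formula established just above and to match it, term by term, against the hypothesis. Since $g$ is a stationary symmetric GAF, the computation following Theorem~\ref{thm-Rdensity} applies verbatim and gives
\[
\E\, n_g\bigl([0,1)\times[a,b)\bigr) = \int_a^b S(y)\,dy + R\,\done_{[a,b)}(0),
\]
where $S$ and $R$ are formed from the spectral measure $\mu$ of $g$ as in \eqref{eq S} and \eqref{eq R}, and $\psi(y)=\int e^{-4\pi y\lambda}\,d\mu(\lambda)$. Equating this with $2\delta_0([a,b))$ for every $-\Delta<a<0<b<\Delta$ and differentiating in $b$ over $(0,\Delta)$ (respectively in $a$ over $(-\Delta,0)$) forces $S\equiv 0$ off the origin, while comparing the remaining atomic parts forces $R=2$.

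The next step is to turn these two facts into an ODE for $\psi$ and solve it. From $R=2$ and \eqref{eq R} we get $\int\lambda^2\,d\mu=\int d\mu$, i.e.\ the constant $A=\sqrt{\mathcal E_2}$ appearing in the previous subsection equals $1$. Since $S\equiv 0$ on $(0,\Delta)$, the inner expression of \eqref{eq S} is constant there, and by its limit at $0^+$ it equals $A=1$; hence
\[
\psi'(y)=4\pi\sqrt{\psi(y)^2-\psi(0)^2}, \qquad y\in(0,\Delta).
\]
Here $\psi$ is convex with $\psi'(0)=0$ (as $\mu$ is symmetric) and $\psi\ge\psi(0)$, so integrating $\tfrac{d}{dy}\operatorname{arccosh}\!\bigl(\psi(y)/\psi(0)\bigr)=4\pi$ with the value $\psi(0)$ at the origin yields $\psi(y)=\psi(0)\cosh(4\pi y)$, the case $y<0$ following from the evenness of $\psi$.

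Finally I would recognize the right-hand side as a transform of two symmetric atoms and invoke uniqueness. Writing $\cosh(4\pi y)=\tfrac12\bigl(e^{-4\pi y}+e^{4\pi y}\bigr)$ shows $\psi(y)=\int e^{-4\pi y\lambda}\,d\bigl[\tfrac{\psi(0)}2(\delta_1+\delta_{-1})\bigr](\lambda)$ on $(-\Delta,\Delta)$. Matching the Taylor coefficients at $0$ identifies all moments of $\mu$ with those of $\tfrac{\psi(0)}2(\delta_1+\delta_{-1})$; since \eqref{cond} guarantees that $\mu$ has finite exponential moments, the moment problem is determinate and therefore $\mu=\tfrac{\psi(0)}2(\delta_1+\delta_{-1})$, which is $\tfrac12(\delta_1+\delta_{-1})$ up to a constant multiplier, as claimed. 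I expect the main obstacle to be the two ``soft'' steps surrounding the routine ODE integration: upgrading $S\equiv 0$ off the origin to genuine constancy of the inner expression of \eqref{eq S} on all of $(0,\Delta)$, and the concluding uniqueness step, which genuinely relies on the exponential-moment condition \eqref{cond} (equivalently, on the analytic continuation $\psi(y)=r(2iy)$) to pass from equality of $\psi$ on an interval to equality of the spectral measures.
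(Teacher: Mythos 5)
Your argument is correct and follows essentially the same route as the paper: deduce $S\equiv 0$ away from the origin and $R=2$, solve the resulting ODE for $\psi$ to get $\psi(y)=\psi(0)\cosh(4\pi y)$, and identify the spectral measure from its transform. The only cosmetic differences are that the paper leaves the ODE constant $c$ free and pins it down at the end via $R=2$ (where you fix it up front through the limit $A=1$), and that you spell out the transform-uniqueness step the paper dispatches in one line.
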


From this last claim it follows that the spectral measure of $f$ consists
only of symmetric atoms at $\pm1$, which contradicts our assumption.

\medskip
It remains now to prove the claims. In the course of their proof we justify
exchange of limits and expectations by the following

\begin{prop}[Offord-type estimate for sine-like levels] \label{prop off2}
Let $g$ be a symmetric Gaussian analytic function on a domain $D$, and let
$\alpha$ and $\beta$ be fixed complex numbers. Then for any compact $K\subset
D$, the number $N(g, \alpha,\beta ;K)$ of solutions to
$g(z)=-\eta_{\alpha,\beta}(z)$ with $z\in K$ has exponential tail: There
exist positive constants $C$ and $c$ such that
\begin{equation*}
\Pro(N(g, \alpha,\beta ;K)>s)\leq Ce^{-cs}.
\end{equation*}
\end{prop}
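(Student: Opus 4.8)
The plan is to recognize that $N(g,\alpha,\beta;K)$ is exactly the zero-counting number $n_h(K)$ of the single analytic function $h(z) := g(z)+\eta_{\alpha,\beta}(z)$, and then to reproduce the Jensen-formula proof of the ordinary Offord estimate (Theorem \ref{thm off}) for this $h$. The one feature that prevents a direct appeal to Theorem \ref{thm off} is that $h$ is a \emph{non-centered} Gaussian analytic function: its mean is the deterministic analytic function $\eta_{\alpha,\beta}$, while its random (centered) part is the symmetric GAF $g$. So the whole point is to check that the two ingredients of the Jensen argument survive the deterministic shift, with constants that are allowed to depend on the fixed $\alpha,\beta$.

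First I would cover the compact $K$ by finitely many closed disks $\overline{B(w_j,r_j)}$, $j=1,\dots,m$, chosen so that the enlarged disks $\overline{B(w_j,2r_j)}$ are still contained in $D$. Then $N(g,\alpha,\beta;K)\le \sum_{j=1}^m n_h(B(w_j,r_j))$, and by a union bound it suffices to bound $\Pro(n_h(B(w_j,r_j))>s)$ for each $j$. For a fixed disk, Jensen's inequality gives, whenever $h(w_j)\ne 0$,
\[
n_h(B(w_j,r_j))\,\log 2 \ \le\ \max_{|\zeta-w_j|=2r_j}\log|h(\zeta)|\ -\ \log|h(w_j)|,
\]
so that the event $\{n_h(B(w_j,r_j))>s\}$ forces either $\max_{|\zeta-w_j|=2r_j}\log|h(\zeta)|>\tfrac12 s\log 2$ or $-\log|h(w_j)|>\tfrac12 s\log 2$.

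Next I would estimate each of these two events. For the maximum, write $h=g+\eta_{\alpha,\beta}$ and note that on the compact circle $|\zeta-w_j|=2r_j$ one has $|h|\le \sup|g|+\|\eta_{\alpha,\beta}\|_\infty$, where the supremum is that of the centered Gaussian process $g$ over a fixed compact set and the second term is a finite constant depending only on $\alpha,\beta$ and $K$. By the standard Gaussian concentration bound (Borell--TIS), $\Pro(\sup|g|>u)\le Ce^{-cu^2}$ for large $u$; hence $\Pro(\max\log|h|>t)\le Ce^{-ce^{2t}}$, which is far smaller than any $Ce^{-ct}$. For the lower bound I would use anti-concentration of the Gaussian $h(w_j)$. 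The variable $h(w_j)=g(w_j)+\eta_{\alpha,\beta}(w_j)$ is Gaussian with the same covariance as $g(w_j)$ and merely a shifted mean; since $g$ has no deterministic zeroes, $K(w_j,w_j)>0$, so projecting onto a direction realizing the larger eigenvalue $\lambda_2=\tfrac12\bigl(K(w_j,w_j)+|K(w_j,\overline{w_j})|\bigr)>0$ of Lemma \ref{eigenv} produces a genuine one-dimensional Gaussian of variance at least $\lambda_2$. A bounded-density estimate then yields $\Pro(|h(w_j)|<\varepsilon)\le C\varepsilon$, that is, $\Pro(-\log|h(w_j)|>t)\le Ce^{-t}$. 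The crucial observation here is that the mean shift $\eta_{\alpha,\beta}(w_j)$ leaves the variance untouched, so anti-concentration holds with a constant independent of the shift (which enters the admissible constants only through $\|\eta_{\alpha,\beta}\|_\infty$ in the previous step).

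Combining the two tail bounds for each disk and summing over $j=1,\dots,m$ gives $\Pro(N(g,\alpha,\beta;K)>s)\le Ce^{-cs}$, as claimed. I expect the main (though mild) obstacle to be precisely the non-centeredness: one must verify that replacing the centered GAF of Theorem \ref{thm off} by the shifted process $g+\eta_{\alpha,\beta}$ spoils neither the Gaussian sup-tail (it does not, since the deterministic part is bounded on the relevant compact set) nor the anti-concentration at the reference point (it does not, since the covariance is unchanged). A secondary point to treat carefully is the degenerate case where $h(w_j)$ is effectively a one-dimensional Gaussian (for instance when $w_j$ is real), which is why I pass to the larger eigenvalue $\lambda_2$ rather than insisting on a full two-dimensional anti-concentration estimate.
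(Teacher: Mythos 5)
Your proposal is correct, but it takes a genuinely different route from the paper's. The paper omits the proof of Proposition \ref{prop off2}, referring instead to its proof of Proposition \ref{prop offT}: there one dominates the counting function by $\frac 1{2\pi}\int \Delta\phi\,\log|f|\,dm_2$ for a suitable test function $\phi$, restricts to the bad event $A=\{N>s\}$, and invokes Lemma \ref{lem log bound}, which bounds $|\E(\chi_E\log|\eta|)|$ for a \emph{non-centered} two-dimensional Gaussian by $p(c_1-c_2\log p)$; the inequality $sp\le Cp(c_1-c_2\log p)$ then yields $p\le Ce^{-cs}$. In that scheme the deterministic shift $\eta_{\alpha,\beta}$ is absorbed into the mean $\mu$ of Lemma \ref{lem log bound}, which is exactly the point you flag as the main obstacle. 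Your argument instead covers $K$ by disks, applies Jensen's formula on each, and splits the event into a sup-tail (handled by Borell--TIS, using that the deterministic part is bounded on compacts) and an anti-concentration bound at the center (handled by projecting onto the top eigendirection of $\Sigma$, where the mean shift is harmless since $K(w_j,w_j)>0$ rules out degeneracy). Both arguments are sound; yours is more modular and avoids the explicit computation in Lemma \ref{lem log bound}, at the price of importing Borell--TIS. One trade-off worth noting: your disk-covering union bound introduces a factor equal to the number of disks, which is immaterial for a fixed compact $K$ (the present proposition) but would not directly give the uniform-in-$T$ estimate of Proposition \ref{prop offT}, where the region $[0,T]\times[a,b]$ grows; the paper's test-function argument, with $\norm{\Delta\phi_T}_{L^1}=O(T)$ cancelling against $sT$, is tailored to handle both statements by the same mechanism.
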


This proposition's proof is much similar to that of proposition \ref{prop
offT}, and we omit it.

\begin{proof} [Proof of claim \ref{clm Rcont}]
Fix $-\Delta< \alpha_0<\beta_0 <\Delta$. It is clear that almost surely in
$g$, $N_1(g,\alpha,\beta_0)$ approaches $N_1(g,\alpha_0,\beta_0)$ as $\alpha$
approaches $\alpha_0$ (the event of having a solution to
$g(z)=-\eta_{\alpha_0,\beta_0}$ on the boundary of $[0,1]\times[a,b]$ is
negligible).

By proposition ~\ref{prop off2}, we may pass to the limit:
\begin{align*}
&\lim_{\alpha\to\alpha_0}\E_g N_1(g,\alpha,\beta_0) = \lim_{\alpha\to\alpha_0}\int \Pro_g(N_1(g,\alpha,\beta_0)>s) ds =\\
& = \int \lim_{\alpha\to\alpha_0}\Pro_g(N_1(g,\alpha, \beta_0)>s) ds
= \int \Pro_g(N_1(g,\alpha_0,\beta_0)>s) ds = \E_g(N_1(g,\zeta_0)).
\end{align*}

\end{proof}

\begin{proof} [Proof of claim \ref{clm Rsol decr}.]
Fix $\beta$ and $g$. For any $\alpha\neq0$, the zeroes of
$$h_\alpha(z) = \frac{g(z) + \sin(2\pi z)}{\alpha}+\cos(2\pi z)$$
and of $f(z) = g(z)+\eta_{\alpha,\beta}(z)$ are identical. Now notice that
$h_\alpha(z)$ converges locally uniformly to $\cos(2\pi z)$ as $\alpha\to
\infty$ (i.e., uniformly on any compact set). By Hurwitz's Theorem, this
implies that the zero counting measures also converge locally uniformly, in
the sense that for any compact $K\subset D$,
$$\lim_{\alpha\to\infty} n_{h_\alpha}(K) = n_{\cos(2\pi z)}(K).$$ By the bound in proposition \ref{prop off2}, this almost
sure convergence in $g$ yields moment convergence:
$$\E_g n_{h_\alpha}(K) \to n_{\cos(2\pi z)}(K), \,\, \text{as } \alpha\to\infty.  $$

\end{proof}

\begin{proof} [Proof of claim \ref{clm Rno zeroes}.]
Suppose the spectral measure is normalized, so that $\psi(0) = \int_\R
d\rho(\lambda) = 1$ (else, multiply it by a constant). The premise and
theorem \ref{main} give two conditions on $\psi(y)=K(iy,iy)$:
\begin{align*}
& \frac {\psi'(y)}{\sqrt{\psi(y)^2-1}} = c\, , & R =
2\sqrt{\int_{\mathbb R}\lambda^2 d\rho(\lambda)} = 2\, ,
\end{align*}
for some constant $c\in \mathbb R$. Solving the left-side ordinary
differential equation, and using $\psi(0)=1$, we get $\psi(y) = \cosh(cy)$.
Since $\psi$ is a Laplace transform of $\rho$, we get $\rho = \frac 1 2
(\delta_{c/2\pi} + \delta_{-c/2\pi} )$. But the right-side equation would be
satisfied only if $c=2\pi$.
\end{proof}

\appendix

\section{Convergence on all intervals (proof of proposition \ref{prop all segs})}\label{app all segs}
In this section we prove proposition \ref{prop all segs}. We use the
notations developed in section \ref{sec prelim}. For any point in the
probability space $\omega\in\Omega$, let $\nu^\omega_N$ be the sequence of
measures introduced in \eqref{eq nu_T}, for integer $T=N$ (the non-integer
case follows just as in the proof of part (i) of Theorem \ref{main}, and will
not be discussed).

Define the set
$$C = \{\omega\in\Omega: \, (\nu^\omega_N)_N \text{ converges weakly} \} $$
Notice that by part (i) of Theorem \ref{main}, $\Pro(C)=1$. For convenience
of notation we denote the limit $\nu_f=\nu^\omega$. From general measure
theory, one can deduce that almost surely, $\nu_N([a,b))$ converges to
$\nu^\omega([a,b))$ for all $a,b$ out of a countable exceptional set. This
exceptional set is the set of atoms of $\nu_f$ (which might be random). We
thus turn to define

$$A = \{\omega\in\Omega: \, \lim_{N\to\infty} \nu^\omega_N\{a\}=\nu^\omega\{a\} \text{, for each atom } a \text{ of }\nu^\omega\}\subset C. $$

\begin{claim}\label{clm A meas}
$A$ is measurable with respect to $\mathcal F_f$.
\end{claim}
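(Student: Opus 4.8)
The plan is to exhibit $A$ as the preimage, under a measurable map into a space of measures, of a suitable Borel set, thereby isolating the only delicate point in a measurable enumeration of the atoms of the random limit. Let $\mathcal M$ be the space of locally finite Borel measures on $(-\Delta,\Delta)$ equipped with the vague topology; it is Polish, and its Borel $\sigma$-algebra is generated by the evaluations $\mu\mapsto\mu(h)$, $h\in C_0^\infty(-\Delta,\Delta)$. For each fixed $N$ the map $\omega\mapsto\nu^\omega_N\in\mathcal M$ is $\mathcal F_f$-measurable, since for every test function $h$ the number $\nu^\omega_N(h)=\frac1N\int\done_{[0,N)}(x)h(y)\,dn_{f_\omega}(x,y)$ is a measurable functional of $f_\omega$, hence of $\omega$; here the Offord-type estimate (Theorem \ref{thm off}) guarantees that $\nu^\omega_N$ is a.s. genuinely locally finite (indeed a finite atomic measure on each compact subinterval, being the projection of the finitely many zeroes in $[0,N)\times[-\Delta_1,\Delta_1]$). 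On $C$ the limit $\omega\mapsto\nu^\omega$ is the pointwise vague limit $\lim_N\nu^\omega_N$, hence also measurable. Thus $\Theta(\omega):=(\nu^\omega,(\nu^\omega_N)_N)$ defines a measurable map $C\to\mathcal M\times\mathcal M^{\mathbb N}$, and since $A\subset C$ by definition, it suffices to show that
$$B=\bigl\{(\mu,(\mu_N)_N):\ \lim_N\mu_N\{a\}=\mu\{a\}\ \text{for every atom }a\text{ of }\mu\bigr\}$$
is Borel; then $A=C\cap\Theta^{-1}(B)$ is $\mathcal F_f$-measurable.

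Next I reduce the atomic masses appearing in $B$ to jointly measurable quantities. For an open interval the map $\mu\mapsto\mu((p,q))$ is Borel on $\mathcal M$ (it is lower semicontinuous for the vague topology), and letting $p\uparrow a$, $q\downarrow a$ one checks that $(\mu,a)\mapsto\mu\{a\}=\lim_{\epsilon\downarrow0}\mu((a-\epsilon,a+\epsilon))$ is jointly Borel on $\mathcal M\times(-\Delta,\Delta)$. Consequently, whenever $a=a(\mu)$ is a Borel function of $\mu$, both $\mu\mapsto\mu\{a(\mu)\}$ and, for each $N$, $\mu\mapsto\mu_N\{a(\mu)\}$ are Borel.

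The crucial step is then to replace the quantifier ``for every atom $a$ of $\mu$'' by a countable intersection. For this I invoke a measurable enumeration of the atoms of a measurable family of locally finite measures: there are Borel maps $\mu\mapsto a_i(\mu)$ and $\mu\mapsto m_i(\mu)$, $i\in\mathbb N$, such that the atomic part of $\mu$ equals $\sum_i m_i(\mu)\,\delta_{a_i(\mu)}$ (for instance, list the atoms in order of decreasing mass, breaking ties by position; each functional is Borel because in any compact subinterval only finitely many atoms exceed a fixed mass threshold, and these are detected by the interval-mass functionals of the previous paragraph). With such an enumeration,
$$B=\bigcap_{i\in\mathbb N}\Bigl\{(\mu,(\mu_N)_N):\ m_i(\mu)=0\ \text{ or }\ \lim_N\mu_N\{a_i(\mu)\}=m_i(\mu)\Bigr\},$$
a countable intersection of Borel sets, since a limit over $N\in\mathbb N$ of Borel functions is Borel. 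Hence $B$ is Borel, and $A=C\cap\Theta^{-1}(B)$ is $\mathcal F_f$-measurable.

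The routine parts are the measurability of $\omega\mapsto\nu^\omega_N$ and of the interval- and point-mass functionals. The one point deserving care, and the main obstacle, is the measurable enumeration of the atoms of the random limit $\nu^\omega$: their locations vary with $\omega$ and need not be rational, so one cannot simply quantify over a fixed countable set of points. I would justify this enumeration either by citing the theory of random measures or by the hands-on construction indicated above, exploiting that in each compact subinterval only finitely many atoms exceed a given mass threshold, so that the $i$-th largest atom of $\mu$ (with its position) depends Borel-measurably on $\mu$.
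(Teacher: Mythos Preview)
Your proof is correct and follows essentially the same route as the paper: both reduce the question to the Borel structure on a Polish space of measures, establish that $\omega\mapsto(\nu^\omega,(\nu^\omega_N)_N)$ is measurable, and then handle the moving atoms via a measurable enumeration ordered by decreasing mass with positional tie-breaking. The paper uses the L\'evy--Prokhorov metric where you use the vague topology, and it leaves the joint measurability of $(\mu,a)\mapsto\mu\{a\}$ implicit in a composition argument, but the substance is identical.
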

The proof of this claim will be presented in the end of this section. Our
next goal would be to prove:
\begin{claim}\label{clm ProA=1}
$\Pro(A) = 1$.
\end{claim}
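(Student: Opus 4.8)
The plan is to read off the convergence $\nu_N^\omega\{a\}\to\nu^\omega\{a\}$ at every atom directly from the Birkhoff ergodic theorem, exploiting the invariance of the limiting measure under the shift. First I record what the proof of part~(i) already yields without assuming ergodicity: applying Birkhoff to the stationary sequence $X_k(h)$ gives the \emph{conditional} mean, so that $\nu^\omega(h)=\E[\nu_1(h)\mid\mathcal I]$ for every $h\in C_0^\infty(-\Delta,\Delta)$, where $\mathcal I\subset\mathcal F_f$ is the $\sigma$-field of sets invariant under the time-one shift $S_1$ and $\nu_1$ is $\nu_N$ at $N=1$. Consequently $\omega\mapsto\nu^\omega$ is $\mathcal I$-measurable (it is the conditional mean of the random measure $\nu_1$), and therefore so are its atom set and its atom masses; in particular the atoms admit a measurable enumeration $\{a_i(\omega)\}_i$ by $\mathcal I$-measurable functions (order them by decreasing mass and then by position). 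If $\nu^\omega$ has no atoms the defining condition of $A$ is vacuous, so it suffices to treat each $a_i$ separately and intersect countably many almost-sure events.

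Fix one such $\mathcal I$-measurable selection $a=a(\omega)$, restricting attention to the (invariant) event $\{a\in[-\Delta_1,\Delta_1]\}$ for a fixed $\Delta_1<\Delta$, and at the end let $\Delta_1\uparrow\Delta$. The key observation is that, because $a$ is $S_1$-invariant and the zero-counting measure transforms by $n_{f_{S_1\omega}}(B)=n_{f_\omega}(B+1)$, the random variables
$$Z_k(\omega)=n_{f_\omega}\bigl([k,k+1)\times\{a(\omega)\}\bigr)$$
form a stationary sequence, namely $Z_k=Z_0\circ S_1^k$. They are integrable, since on $\{a\in[-\Delta_1,\Delta_1]\}$ we have $Z_0\le n_f([0,1]\times[-\Delta_1,\Delta_1])$, which has finite expectation by the Offord-type estimate (Theorem~\ref{thm off}). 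Since $\nu_N^\omega\{a\}=\frac1N\sum_{k=0}^{N-1}Z_k$ by \eqref{eq nu_T}, the conditional Birkhoff theorem yields, almost surely,
$$\lim_{N\to\infty}\nu_N^\omega\{a\}=\E[Z_0\mid\mathcal I]=\E\bigl[\nu_1\{a\}\mid\mathcal I\bigr].$$

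It remains to identify this limit with $\nu^\omega\{a\}$, and this is the step I expect to be the main obstacle, since it requires commuting the conditional expectation with evaluation at the \emph{random} height $a$. I would do this by passing to a regular conditional probability $\Pro(\,\cdot\mid\mathcal I)$ (a disintegration over $\mathcal I$, which exists since $\Omega$ is a standard space): then $\nu^\omega=\int\nu_1^{\omega'}\,\Pro(d\omega'\mid\mathcal I)(\omega)$ as an identity of measures, and because $a(\omega)$ is $\mathcal I$-measurable it is a genuine constant under $\Pro(\,\cdot\mid\mathcal I)(\omega)$. Evaluating both sides at the single point $\{a(\omega)\}$ then gives $\nu^\omega\{a\}=\int\nu_1^{\omega'}\{a(\omega)\}\,\Pro(d\omega'\mid\mathcal I)(\omega)=\E[\nu_1\{a\}\mid\mathcal I]$. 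The delicate points are the joint measurability needed to evaluate this kernel of measures at an $\mathcal I$-measurable point and the verification that $\omega\mapsto\nu^\omega\{a(\omega)\}$ coincides with the $\mathcal I$-measurable function produced by the conditional mean; granting these, $\nu_N^\omega\{a\}\to\nu^\omega\{a\}$ almost surely.

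Finally I assemble the pieces: applying the above to each selection $a_i$ and to each $\Delta_1=\Delta-\frac1m$, and intersecting the resulting countable family of almost-sure events, I obtain a single event of full probability on which $\nu_N^\omega\{a\}\to\nu^\omega\{a\}$ for every atom $a$ of $\nu^\omega$. This is precisely the event $A$, so $\Pro(A)=1$.
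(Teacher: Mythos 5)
Your proof is correct, and in substance it is the same argument as the paper's: the paper invokes the Ergodic Decomposition Theorem for the system $(\Omega,\mathcal F_{Z_f},\Pro,S)$, reduces to showing $\gamma(A)=1$ for each ergodic component $\gamma$, observes that under such a $\gamma$ the ($S$-invariant) limiting measure $\nu^\omega$ is a.s.\ constant so its atoms are a countable set of \emph{deterministic} points, and then applies Birkhoff to the stationary sequence $X_k(a)=n_f([k,k+1)\times\{a\})$ at each fixed atom $a$. On a standard Borel space the ergodic decomposition is exactly the disintegration over the invariant $\sigma$-field $\mathcal I$ that you use, so the two routes are two phrasings of one idea. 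The one genuine difference is where the work lands: the step you flag as delicate --- commuting $\E[\,\cdot\mid\mathcal I]$ with evaluation of the measure at the $\mathcal I$-measurable random height $a(\omega)$ --- simply dissolves in the paper's packaging, because under each ergodic $\gamma$ the atom is a constant and one applies the unconditional ergodic theorem with identification of the limit as $\E_\gamma n_f([0,1)\times\{a\})=\nu^\omega\{a\}$. Your freezing/disintegration argument for that step is valid (it is the standard fact that an $\mathcal I$-measurable variable acts as a constant under a regular conditional probability given $\mathcal I$, applied to the kernel $(\omega',y)\mapsto\nu_1^{\omega'}\{y\}$, whose joint measurability one must check), but if you want to avoid it entirely, stating the argument through the ergodic decomposition as the paper does is the cleaner bookkeeping.
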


Our main tool will be the Ergodic Decomposition Theorem (proved, for
instance, in \cite[chapter 2.2.8]{aaro}):
\begin{thm}[Ergodic Decomposition]
Let $(\Omega,\mathcal F,\Pro)$ be a standard Borel-space, equipped with a
measure preserving transformation $S:\Omega\to\Omega$. Then the set
$E^S(\Omega)$ of ergodic probability measures on $\Omega$ is not empty, and
there exists a map $\beta:\Omega\to E^S(\Omega)$ such that for any measurable
set $A\in\mathcal F$ the following holds:
\begin{enumerate}
\item the map $\Big\{\begin{array}{c}
\Omega\to[0,1] \\
\omega\mapsto \beta_\omega(A)
\end{array}
$ is measurable.
\smallskip
\item $\Pro(A) = \int_\Omega \beta_\omega(A) d\Pro(\omega). $
\end{enumerate}
\end{thm}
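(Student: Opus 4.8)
The plan is to obtain $\beta$ by disintegrating $\Pro$ over the $\sigma$-algebra of $S$-invariant sets and then verifying, \emph{a posteriori}, that almost every fiber of this disintegration is an ergodic measure. Let $\mathcal I=\{A\in\mathcal F:\,S^{-1}A=A\}$ be the invariant sub-$\sigma$-algebra. Since $\Omega$ is a standard Borel space, Rokhlin's disintegration theorem applied to the inclusion $\mathcal I\subset\mathcal F$ (working, if necessary, with a countably generated sub-$\sigma$-algebra agreeing with $\mathcal I$ modulo $\Pro$-null sets) yields a family of probability measures $\{\beta_\omega\}_{\omega\in\Omega}$ forming a regular conditional probability given $\mathcal I$; that is, $\beta_\omega(A)=\E[\done_A\mid\mathcal I](\omega)$ for $\Pro$-a.e.\ $\omega$, with $\omega\mapsto\beta_\omega(A)$ being $\mathcal I$-measurable. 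Both conclusions of the theorem are then immediate from the defining properties of conditional expectation: item (1) is precisely $\mathcal I$-measurability (hence $\mathcal F$-measurability) of $\omega\mapsto\beta_\omega(A)$, and item (2) is the tower property $\Pro(A)=\E\big[\E[\done_A\mid\mathcal I]\big]=\int_\Omega\beta_\omega(A)\,d\Pro(\omega)$.

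It remains to show that the fibers $\beta_\omega$ are $S$-invariant ergodic measures for $\Pro$-a.e.\ $\omega$. Invariance is routine: for fixed $A$ one has $\beta_\omega(S^{-1}A)=\E[\done_A\circ S\mid\mathcal I](\omega)$, and since $S$ preserves both $\Pro$ and the $\sigma$-algebra $\mathcal I$, this equals $\E[\done_A\mid\mathcal I](\omega)=\beta_\omega(A)$ for a.e.\ $\omega$; running over a countable algebra generating $\mathcal F$ and intersecting the corresponding full-measure sets produces a single conull set on which $\beta_\omega=S_*\beta_\omega$ as measures.

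The genuine obstacle is ergodicity of the fibers, and I would establish it using the pointwise ergodic theorem together with the self-consistency of the disintegration. Fix a countable generating algebra $\mathcal A\subset\mathcal F$. By Birkhoff's theorem applied under the ambient $\Pro$, for each $A\in\mathcal A$ the averages $\frac1n\sum_{k=0}^{n-1}\done_A(S^k\omega)$ converge $\Pro$-a.e.\ to $\E[\done_A\mid\mathcal I](\omega)=\beta_\omega(A)$; call $\Omega_1$ the conull set where this holds for every $A\in\mathcal A$. From $\Pro=\int\beta_\omega\,d\Pro$ we get $\beta_\omega(\Omega_1)=1$ for a.e.\ $\omega$, so the ambient convergence also holds $\beta_\omega$-a.e.\ Combining this with the standard self-consistency ($\beta_{\omega'}=\beta_\omega$ for $\beta_\omega$-a.e.\ $\omega'$, valid for $\Pro$-a.e.\ $\omega$), I would deduce that the Birkhoff averages of each $A\in\mathcal A$ converge $\beta_\omega$-a.e.\ to the \emph{constant} $\beta_\omega(A)=\int\done_A\,d\beta_\omega$. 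Since the averaging operators are $L^1$-contractions, convergence to the mean for the dense family $\mathcal A$ propagates to all of $L^1(\beta_\omega)$, which is exactly the statement that $\beta_\omega$ is ergodic. This is the delicate point: one must feed the \emph{ambient} Birkhoff limits into the \emph{fiber} measures, and it is the self-consistency of the disintegration that licenses replacing $\beta_{\omega'}$ by $\beta_\omega$ inside a $\beta_\omega$-almost-sure statement.

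Finally, to promote the ``almost every'' conclusion to a map defined on all of $\Omega$ taking values in the nonempty set $E^S(\Omega)$, I would note that the conull set $\Omega_0$ on which $\beta_\omega$ is invariant and ergodic is in particular nonempty (whence $E^S(\Omega)\neq\emptyset$), fix one element $\mu_0\in E^S(\Omega)$, and redefine $\beta_\omega:=\mu_0$ for $\omega\notin\Omega_0$. This alteration changes $\beta$ only on a $\Pro$-null set, so it preserves both the measurability in (1) and the integral identity in (2), while guaranteeing $\beta_\omega\in E^S(\Omega)$ for every $\omega$.
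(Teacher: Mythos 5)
The paper contains no proof of this statement at all: it is imported as a classical result with the citation \cite[chapter~2.2.8]{aaro} and then used as a black box in the proof of Proposition~\ref{prop all segs}, so there is nothing internal to compare your argument against. Judged on its own merits, your proof is correct, and it is the standard disintegration argument found in the literature (close in spirit to the source the paper cites): items (1) and (2) are the defining properties of a regular conditional probability given the invariant $\sigma$-algebra $\mathcal I$; fiberwise invariance follows from $S^{-1}I=I$ for $I\in\mathcal I$ together with measure preservation, upgraded from ``per fixed $A$'' to ``as measures'' over a countable generating algebra; and ergodicity is obtained by feeding the \emph{ambient} Birkhoff limits into the fibers. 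You correctly isolate the two genuinely delicate points: that $\mathcal I$ need not be countably generated, so one must replace it by a countably generated $\sigma$-algebra agreeing with it modulo $\Pro$-null sets (this, together with Rokhlin's theorem, is exactly where the standard-Borel hypothesis enters), and that substituting $\beta_\omega$ for $\beta_{\omega'}$ inside a $\beta_\omega$-almost-sure statement is licensed only by the self-consistency property $\beta_{\omega'}=\beta_\omega$ for $\beta_\omega$-a.e.\ $\omega'$. Three routine steps are left implicit and deserve a sentence each in a full write-up: measurability of $\omega\mapsto\beta_\omega$ as a map into the space of probability measures, so that the self-consistency set is measurable; density of the linear span of $\{\done_A:\,A\in\mathcal A\}$ in $L^1(\beta_\omega)$ (a monotone class argument); and the closing implication, namely that if $\frac1n\sum_{k=0}^{n-1}f\circ S^k\to\int f\,d\beta_\omega$ in $L^1(\beta_\omega)$ for every $f\in L^1(\beta_\omega)$, then taking $f=\done_A$ with $S^{-1}A=A$ forces $\done_A=\beta_\omega(A)$ a.e., i.e.\ $\beta_\omega(A)\in\{0,1\}$. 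Your final patching on a null set (redefining $\beta_\omega:=\mu_0$ off the conull good set) indeed preserves both (1) and (2) and settles $E^S(\Omega)\neq\emptyset$. In short: the paper buys brevity by outsourcing the theorem, while your argument makes the appendix self-contained and makes visible precisely where standardness of $(\Omega,\mathcal F)$ is used.
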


\medskip
\begin{proof}[Proof of claim \ref{clm ProA=1}]
The stationary system $(\Omega,\mathcal F_{Z_f},\Pro,S)$ defined in section
\ref{sec prelim} and the set $A$ defined above meet the requirements of the
Ergodic Decomposition Theorem.
Therefore, in order to prove our claim it is enough to show that
$$\forall \gamma\in E^S(\Omega), \gamma(A) = 1. $$

Fix an $S$-ergodic measure $\gamma$. Since $A$ is an $S$-invariant set, we
get $\gamma(A)\in\{0,1\}$. Moreover, the event $\{\nu^\omega \text{ has an
atom in the interval } I\}$ is also invariant, for any interval
$I\subset(-\Delta,\Delta)$. Therefore, $\gamma$-a.s. the limiting measure
$\nu^\omega$ has atoms at some known points
$(a_n)_{n\in\N}\subset(-\Delta,\Delta)$.

For a certain atom $a=a_n$, define the stationary sequence:
$$X_k(a) = n_f([k,k+1)\times\{a\} ),$$
and notice that
$$\nu_N\{a\} = \frac 1 N \sum_{k=0}^{N-1} X_k(a).$$
As $\gamma$ is ergodic, we have by Birkohff's ergodic Theorem:
$$\gamma\text{-a.s. } \nu^\omega_N\{a\} \text{ converges to } \E_\gamma n_f([0,1)\times \{a\}) = \nu^\omega\{a\}, \text { as } N\to\infty$$
Since there are at most countably many atoms, we get $\gamma(A)=1$.
\end{proof}
\medskip
We now know that $\Pro$-a.s., the sequence $\nu_N$ is weak convergent and
converges on any atom of the limiting measure (to the desired limit). A
general claim from measure theory will assure us that in this case, $\nu_N$
converge on any interval:

\begin{claim}
Suppose $(\nu_N)_N$ is a weak-converging sequence of measures on some
interval $I$, and let $\nu$ be the limiting measure. If
$\ds\lim_{N\to\infty}\nu_N\{a\}= \nu\{a\}$ for every atom $a$ of $\nu$, then
$\ds\lim_{N\to\infty}\nu_N(J)= \nu(J)$ for \emph{every} interval $J\subset
I$.
\end{claim}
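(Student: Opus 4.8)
The plan is to reduce everything to the standard portmanteau-type one-sided inequalities for vague convergence, and then to observe that the only obstruction to convergence on a given interval is mass sitting exactly on its two endpoints, which is precisely what the atom hypothesis controls.

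First I would record the one-sided bounds that come for free from the weak convergence. Since $\nu_N\to\nu$ is tested against $C_c(I)$ and all the measures are finite on compact subintervals, a squeezing argument yields, for any $a<b$ in $I$,
$$\limsup_{N\to\infty}\nu_N([a,b])\le \nu([a,b]),\qquad \liminf_{N\to\infty}\nu_N((a,b))\ge \nu((a,b)).$$
For the first, choose continuous compactly supported $h$ with $\mathbf{1}_{[a,b]}\le h$ and $\operatorname{supp}h\subset[a-\delta,b+\delta]$; then $\nu_N([a,b])\le\nu_N(h)\to\nu(h)\le\nu([a-\delta,b+\delta])$, and let $\delta\to 0$. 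The second is symmetric, approximating $\mathbf{1}_{(a,b)}$ from below by functions supported inside $(a,b)$.

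Second, I would upgrade the atom hypothesis to pointwise convergence of the measures at \emph{every} point of $I$. For a non-atom $p$ the same squeeze gives $\limsup_N\nu_N\{p\}\le\nu([p-\delta,p+\delta])\to\nu\{p\}=0$, so $\nu_N\{p\}\to 0=\nu\{p\}$; for an atom $p$ this is exactly the hypothesis. Hence $\nu_N\{p\}\to\nu\{p\}$ for all $p\in I$. This is the crucial point: vague convergence alone supplies only the upper bound $\limsup_N\nu_N\{p\}\le\nu\{p\}$ (mass can escape off an atom, as $\delta_{p+1/N}\to\delta_p$ shows), and the hypothesis provides precisely the missing lower bound at atoms.

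Third, I would assemble the four interval types. For the open interval the lower bound is already in hand, while writing $\nu_N((a,b))=\nu_N([a,b])-\nu_N\{a\}-\nu_N\{b\}$ and combining the closed-interval upper bound with the pointwise convergence at $a,b$ gives $\limsup_N\nu_N((a,b))\le\nu([a,b])-\nu\{a\}-\nu\{b\}=\nu((a,b))$; hence $\nu_N((a,b))\to\nu((a,b))$. Then $\nu_N([a,b])=\nu_N((a,b))+\nu_N\{a\}+\nu_N\{b\}$ and $\nu_N([a,b))=\nu_N((a,b))+\nu_N\{a\}$ converge to the corresponding limits by the open-interval case together with the pointwise convergence at the endpoints, and the remaining half-open type $(a,b]$ is identical. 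The only genuine obstacle here is conceptual rather than computational: weak convergence does not pin down the mass carried by the endpoints, and the gap between the available $\limsup$ and $\liminf$ equals exactly the endpoint atom mass $\nu\{a\}+\nu\{b\}$; the entire content of the claim is that the hypothesis closes this gap, automatically at non-atoms and by assumption at atoms, so no further estimates are required.
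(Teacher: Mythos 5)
Your proof is correct and rests on the same mechanism as the paper's: squeezing indicator functions of intervals between continuous compactly supported test functions, with the atom hypothesis supplying the missing lower bound for the mass at the endpoints. The paper carries this out in a single two-sided sandwich for the representative case $J=[a,b)$ with no atom at $b$ (declaring the other cases similar), whereas your portmanteau-plus-endpoint-arithmetic organization --- first upgrading the hypothesis to $\nu_N\{p\}\to\nu\{p\}$ at \emph{every} point, then reducing all four interval types to the open one --- is more systematic and explicitly covers the cases the paper leaves to the reader, a modest but genuine gain in completeness.
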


\begin{proof}

 We demonstrate the case $J=[a,b)$, where $\nu$ has no atom at $b$ (other cases are similar).

Given $\ep>0$, one can construct piecewise linear functions $\phi^+,\,\phi^-
\in C(I)$ such that:
\begin{equation}\label{eq inds}
\forall x,\, \phi^-(x)\leq \done_{(a,b)} \leq \done_{[a,b)}(x) \leq
\phi^+(x),
\end{equation}
and additionally
$$0<\nu(\phi^+)-\left(\nu(\phi^-)+\nu\{a\} \right)<\epsilon.$$


(For instance, for large enough parameter $n$, take $\phi^+$ supported on
$[a-\frac 1 n, b]$, equals $1$ on $[a,b-\frac 1 n]$ and linear otherwise;
$\phi^-$ supported on $[a,b]$, equals $1$ on $[a+\frac 1 n,b-\frac 1 n]$, and
linear otherwise).

By applying the measure $\nu_N$ to relation \eqref{eq inds}, we get:
$$\nu_N(\phi^-)+ \nu_N\{a\}\leq \nu_N([a,b)) \leq \nu_N(\phi^+)$$

But, from our assumptions, for large enough $N$ we have
$$ \nu(\phi^-)+\nu\{a\}-\epsilon \leq \nu_N([a,b)) \leq \nu(\phi^+)+ \epsilon $$
As the difference between those bounds does not exceed $3\epsilon$, we see
the limit $\lim_{N\to\infty} \nu_N([a,b))$ exists. Since $\nu(\phi^+)$ is as
close as we want to $\nu([a,b))$, we are done.

\end{proof}

It remains only to prove the measurability of $A$.
\begin{proof}[Proof of claim \ref{clm A meas}]
We first investigate some underlying objects. Denote by $P=P(-\Delta,\Delta)$
the space of all locally finite measures on $(-\Delta,\Delta)$ induced with
the L\'evy - Prokhorov metric (for which convergence in metric is equivalent
to weak convergence):
$$\pi(\mu,\nu) := \inf\{ \ep >0\, |\,\, \forall Y\in\mathcal B \,\mu(Y)\leq \nu(Y^\ep) + \ep \text{ and } \nu(Y)\leq\mu(Y^\ep)
+\ep\}, $$ where $\mathcal B$ is the sigma-algebra of Borel subsets
of $(-\Delta,\Delta)$, and $Y^\ep = \ds \cup_{p\in Y} B(p,\ep)$ is an
$\ep$-neighborhood of $Y$.

We claim that the map
$$\omega \mapsto \nu^\omega_1(\cdot) = n_{f_\omega}([0,1)\times \cdot) \in P$$
is measurable; in fact, it is continuous (A small change of the coefficients
$\omega=(a_1,a_2,\dots)$ in $l^2$ sense will yield a small change in the
counting measure of zeroes $\nu^\omega_1$ in L\'evy - Prokhorov sense).

Now consider the space $X = P^\N$ of sequences of measures with the product
topology. Notice that the map $\Omega\to X$ defined by $\omega \mapsto
\{\nu_N^\omega\}$ is measurable, as each coordinate is measurable; Moreover,
its image lies almost surely in the (measurable subset) of weak converging
sequences. The map $C \to P$ which takes a weak converging sequence
$(\nu_N)\in C$ to its limit $\nu\in P$ is also measurable. We arrive at
\begin{obs}\label{obs-meas}
Any measurable set $M\in P$ induces a measurable set $\widetilde M =
\{\omega:\,\nu^\omega\in M \}\subset C \subset\Omega$.
\end{obs}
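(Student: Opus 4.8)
The plan is to realize $\widetilde M$ as the preimage $\Phi^{-1}(M)$ of $M$ under the single map $\Phi\colon\omega\mapsto\nu^\omega$, and to verify that $\Phi$ is measurable by writing it as a composition of the two measurable maps constructed in the paragraph just above. Once $\Phi$ is $(\mathcal F_f,\mathcal B(P))$-measurable, measurability of $\widetilde M=\Phi^{-1}(M)$ for every Borel $M\subset P$ is nothing but the definition of a measurable map, so the bulk of the work is simply to assemble the pieces correctly.

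Concretely, I would set $X_0\subset X=P^{\N}$ to be the subset of weakly convergent sequences, and let $\ell\colon X_0\to P$ be the map sending a convergent sequence to its weak limit. The preceding discussion furnishes exactly the two ingredients I need: the map $\iota\colon\Omega\to X$, $\omega\mapsto(\nu_N^\omega)_N$, is measurable (each coordinate $\omega\mapsto\nu_N^\omega$ is continuous into $(P,\pi)$, being a normalized finite sum of shifts of the continuous map $\omega\mapsto\nu_1^\omega$), and both $X_0$ and the limit map $\ell$ are measurable. By the very definition of $C$ one has $C=\iota^{-1}(X_0)$, and on $C$ the composition $\ell\circ\iota$ agrees with $\omega\mapsto\nu^\omega$. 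Hence $\Phi=\ell\circ(\iota|_C)$ is measurable as a composition of measurable maps.

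With this in hand the observation is immediate: for measurable $M\subset P$,
$$\widetilde M=\{\omega\in C:\nu^\omega\in M\}=\Phi^{-1}(M)$$
is a measurable subset of $C$, and since $C$ is itself measurable (indeed $\Pro(C)=1$ by part (i) of Theorem \ref{main}), $\widetilde M$ lies in $\mathcal F_f$.

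The only genuine content lies in the measurability of $X_0$ and $\ell$, which is where I expect the main obstacle to be; but this is precisely what is supplied by the paragraph preceding the observation, so in the write-up I would simply cite it. If a self-contained argument were wanted, the standard route is to fix a countable weak-convergence-determining family $\{h_j\}\subset C_0^\infty(-\Delta,\Delta)$: each evaluation $\nu\mapsto\nu(h_j)$ is continuous on $(P,\pi)$, so each coordinate map $(\mu_N)_N\mapsto\mu_N(h_j)$ is measurable on $X$; then $X_0$ is the measurable set on which every real sequence $(\mu_N(h_j))_N$ converges, and $\ell$ is determined by $\ell((\mu_N))(h_j)=\lim_N\mu_N(h_j)$, exhibiting it as a pointwise limit of measurable maps and hence measurable.
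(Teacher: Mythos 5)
Your proposal is correct and follows essentially the same route as the paper: the paragraph preceding the observation in the appendix establishes exactly your factorization $\omega\mapsto(\nu_N^\omega)_N\mapsto\nu^\omega$ through $P^{\N}$, with measurability of the coordinate maps, of the set of weakly convergent sequences, and of the limit map, and the observation is then the preimage statement you give. Your optional self-contained argument via a countable convergence-determining family of test functions is a reasonable way to fill in the details the paper leaves implicit, but it does not change the substance of the argument.
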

Consider the event:
$$B=\{\omega\in\Omega: \, \text{ the limiting measure } \nu^\omega \text{ has at least one atom} \}\subset C
\subset X$$

By the last observation, $B$ is measurable w.r.t. $\mathcal F$.

We construct a measurable function $h:\, B\to (-\Delta,\Delta)^{\N}$ which
maps $\omega\in B$ to a list of all atoms of the limiting measure
$\nu^\omega$, as follows. Let $h_1: \, B\to (-\Delta,\Delta)$ be the map
which maps some $\omega\in B$ to the largest atom among those of $\nu^\omega$
(if some (finite) number of atoms share this property, return the left-most
one). Again by observation \ref{obs-meas}, $h_1$ is a measurable map.
In a similar manner we construct $h_2$, which gives the second (left-most)
largest atom; and so forth. Our list of atoms is simply $h=(h_1,h_2,\dots)$.
We notice that
$$A = \ds \cap_{i\in\N} \{\omega: \,
(\nu^\omega_N\{h_i\omega\})_N \text{ is a convergent sequence of
numbers} \}=:\cap_{i\in\N}E_i.$$ All that remains is to prove
measurability of $E_1$.

Indeed, the map $H: X\times (-\Delta,\Delta) \to \{0,1\}$ which matches
$(\{\nu_N\},a)$ to the indicator of the event $\{ (\nu_N\{a\})_N \text{ is a
convergent sequence}\}$ is measurable; by composition of measurable maps
$\done_{E_1} = H((\nu_N^\omega),h_1\omega)$ is a measurable function, as
anticipated.
\end{proof}

\section{Exponential Decay of Some Tail Events - Offord-Type Estimates}\label{sec offord}

In the course of the proof the main theorem, we used several times
exponential estimates on certain probabilities: theorem \ref{thm off},
propositions \ref{prop off2} and \ref{prop offT}; and similar propositions
for GAFs. Such estimates are sometimes referred to as "Offord-type large
deviations estimates". We demonstrate the proof of proposition \ref{prop
offT}, as the rest are similar. We adopt the proof of Sodin \cite{Sod-Off},
presented also in \cite[chapter~7]{GAF book}.

We first present our key-lemma, which deals with $2$-dimensional Gaussian
random variables.
\begin{lem}\label{lem log bound}
If $\eta \sim \mathcal N_{\R^2} (\mu,\Sigma)$, and $E$ is an event in the
probability space with $\Pro(E)=p$, then:
$$ |\E (\chi_E\log|\eta|)|  \leq p \left[-(1+\frac 1 {2\lambda_1}) \log p+ \frac p {4\lambda_1} +
\frac 1 2 \log( \text{trace }\Sigma + |\mu|^2)\right],$$ where
$\lambda_1$ is the biggest eigenvalue of $\Sigma$.
\end{lem}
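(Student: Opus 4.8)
The plan is to split $\log|\eta| = \log^+|\eta| - \log^-|\eta|$, where $\log^+ t = \max(\log t, 0)$ and $\log^- t = \max(-\log t, 0)$, and to estimate the two nonnegative contributions separately, using $|\E(\chi_E\log|\eta|)| \le \E(\chi_E\log^+|\eta|) + \E(\chi_E\log^-|\eta|)$. The basic device, which converts the arbitrariness of the event $E$ into a clean dependence on $p=\Pro(E)$, is the elementary identity and bound, valid for any nonnegative random variable $Y$:
\[
\E(\chi_E Y) = \int_0^\infty \Pro\bigl(E\cap\{Y>t\}\bigr)\,dt \le \int_0^\infty \min\bigl(p,\Pro(Y>t)\bigr)\,dt.
\]
Each part thus reduces to a tail estimate for $|\eta|$ — a large-value tail for the $\log^+$ part, a small-ball tail for the $\log^-$ part — truncated at the level $p$. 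It is exactly this truncation $\min(p,\cdot)$ that will produce the entropy-type terms $-p\log p$ in the final bound.

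For the positive part I would take $Y=\log^+|\eta|$, so that $\Pro(Y>t) = \Pro(|\eta|>e^t)$, and control the upper tail by Chebyshev's inequality with the second moment $\E|\eta|^2 = \text{trace}\,\Sigma + |\mu|^2$, namely $\Pro(|\eta|>e^t) \le (\text{trace}\,\Sigma+|\mu|^2)\,e^{-2t}$. Splitting $\int_0^\infty\min(p,\cdot)$ at the crossover $t_0 = \tfrac12\log\frac{\text{trace}\,\Sigma+|\mu|^2}{p}$ where the Chebyshev bound equals $p$, and computing the two elementary pieces, yields a bound of the shape $-\tfrac p2\log p + \tfrac p2\log(\text{trace}\,\Sigma+|\mu|^2) + O(p)$. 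This already accounts for the $\tfrac12\log(\text{trace}\,\Sigma+|\mu|^2)$ term of the statement.

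The negative part is the crux, because $\log|\eta|$ has a logarithmic singularity as $|\eta|\to0$, and its integrability is exactly what the non-degeneracy of the Gaussian provides; this is where the eigenvalues of $\Sigma$ (and, in the applications, the absence of deterministic zeroes) enter. Here I would take $Y=\log^-|\eta|$, so that $\Pro(Y>t)=\Pro(|\eta|<e^{-t})$ is a small-ball probability, and bound it via the boundedness of the Gaussian density. Projecting $\eta$ onto the eigendirection of largest variance $\lambda_1$ reduces the event to one dimension, $\Pro(|\eta|<r)\le\Pro(|\langle\eta,e_1\rangle|<r)$, whose density is at most $(2\pi\lambda_1)^{-1/2}$; equivalently one packs the mass $p$ at maximal density near the origin (a rearrangement/bathtub argument). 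Feeding the resulting small-ball estimate into the truncated layer-cake integral, again splitting at the appropriate crossover, produces a further $-p\log p$ contribution together with the $\lambda_1$-dependent corrections $-\tfrac{p}{2\lambda_1}\log p$ and $\tfrac{p^2}{4\lambda_1}$ after the (routine but slightly delicate) bookkeeping.

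Finally I would collect the two parts, absorb all terms that are $O(p)$ with $z$-independent constants, and rearrange the surviving contributions into the bracketed expression of the statement. The main obstacle is precisely this small-ball analysis of the negative part: one must simultaneously keep the singular factor $\log|\eta|$ integrable through the bounded density (hence through $\lambda_1$) and extract the correct entropy term, while verifying that the crossover computations stay uniform and that the extremal (bathtub) configuration genuinely dominates an \emph{arbitrary} event $E$ of probability $p$.
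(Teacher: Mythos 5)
Your argument is sound and delivers a bound of the required shape, but it is not the paper's route, and the differences are worth spelling out. For the positive part the paper does not use a layer--cake/Chebyshev argument at all: it applies Jensen's inequality to $\tfrac1p\E(\chi_E\log|\eta|^2)\le\log\bigl(\E(|\eta|^2\chi_E)/p\bigr)\le\log\E|\eta|^2-\log p$, which yields the upper bound $\tfrac p2\bigl[\log(\operatorname{trace}\Sigma+|\mu|^2)-\log p\bigr]$ in one line and without the extra $O(p)$ term your crossover computation produces. For the negative part the two proofs share the same skeleton --- split off the event $\{|\eta|>p\}$, which contributes $p\log p$, and run a truncated layer--cake integral $\int_0^1\Pro\bigl(|\eta|<\min(p,s)\bigr)\,ds/s$ over the small-ball event --- but they differ in the small-ball estimate itself: the paper performs a two-dimensional Gaussian computation (diagonalize, rescale, pass to polar coordinates) arriving at $1-e^{-R^2/(2\lambda_1)}$, whereas you project onto the top eigendirection and use the one-dimensional density bound $\Pro(|\eta|<R)\le 2R\,(2\pi\lambda_1)^{-1/2}$. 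Your estimate is linear rather than quadratic in $R$, so the $\lambda_1$-dependent terms you obtain are of the form $Cp/\sqrt{\lambda_1}$ and $Cp\log p/\sqrt{\lambda_1}$ rather than $p^2/(4\lambda_1)$ and $(p/(2\lambda_1))\log p$; hence you do not recover the displayed constants of the lemma verbatim, only an inequality $|\E(\chi_E\log|\eta|)|\le p(C_1-C_2\log p)$ with $C_1$ controlled by $\lambda_1$ and $\operatorname{trace}\Sigma+|\mu|^2$. Since that is exactly what is used in the proof of Proposition \ref{prop offT}, the mismatch is harmless. Your version has two genuine advantages: the projection bound depends only on $\lambda_1$ and remains valid (and of the correct order) when $\lambda_2=0$, a case that actually occurs for a symmetric GAF at real points; and it avoids the ellipse-versus-disk comparison implicit in the paper's two-dimensional computation, which must be handled with care because the ellipse $\{\lambda_1w_1^2+\lambda_2w_2^2<R^2\}$ is contained in the disk of radius $R/\sqrt{\lambda_2}$, not $R/\sqrt{\lambda_1}$.
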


\begin{proof}
\emph{Upper bound:} by Jensen's inequality,
$$
\frac 1 p \E(\chi_E \log |\eta|^2)\leq \log
\left(\frac{\E(|\eta|^2\chi_E)}{p} \right) \leq \log \E|\eta|^2 -
\log p.
$$
If $\eta=u+iv$, then
$$\E |\eta|^2 = \E u^2+E v^2 = \text{var }u + (\E u)^2 + \text{var v} + (\E v)^2 = \text{trace } \Sigma + |\mu|^2 $$
Putting this in the previous equation, we get:
\begin{equation}\label{eq-upper}
\E (\chi_E\log|\eta|)  \leq  \frac p 2 [\log(\text{trace
}\Sigma+|\mu|^2)- \log p].
\end{equation}

\emph{Lower bound: }
\begin{align*}
\E (\chi_E\log|\eta|) &\geq - \E (\chi_E\log^-|\eta|) \\
& = -\E(\log^-|\eta| \chi_{E\cap \{|\eta|<p\}})-\E(\log^-|\eta|
\chi_{E\cap \{|\eta|>p\}})
\end{align*}
The second term may be bounded below by
\begin{equation}\label{eq-second}
-\E(\log^-|\eta| \chi_{E\cap \{|\eta|>p\}})\geq p\log p
\end{equation}

For the first term, we begin with some general manipulations:
\begin{align*}
-\E(\log^-|\eta| \chi_{E\cap \{|\eta|<p\}}) &\geq  -\E(\log^-|\eta|
\chi_{\{|\eta|<p\}})
=-\E \left[\chi_{|\eta|\leq p} \int_0^1 \chi_{s>|\eta|}\frac {ds}{s} \right] 
\\
&=-\int_0^1 \Pro[|\eta|< \min(p,s)]\frac {ds}{s}\notag
\end{align*}

Let us therefore bound from above the probability $\Pro (|\eta| < R)$. Denote
by $\lambda_1,\,\lambda_2$ the eigenvalues of $\Sigma$, where
$\lambda_1\geq\lambda_2\ge 0$.

\begin{align*}
\Pro (|\eta| < R) & = \ds \frac 1 {2\pi \sqrt{|\Sigma|}}\int_{|x|<R}  \exp\left(-\frac 1 2 (x-\mu)^{T}\Sigma ^{-1}(x-\mu) \right) dm_2(x) \\
& \leq \frac 1 {2\pi \sqrt{|\Sigma|}} \int_{|x|<R}\exp\left(-\frac 1 2 x^{T}\Sigma ^{-1}x \right)  dm_2(x) \\
& \leq \frac 1 {2\pi} \int_{|y|<R} \exp\left(-\frac 1 2 (\lambda_1^{-1}y_1 + \lambda_2^{-1}y_2)\right)dm_2(y)\\
&\leq \int_0^{R/\sqrt{\lambda_1}}e^{-\frac 1 2 r^2}r dr = 1-
e^{-\frac {R^2}{2\lambda_1}}
\end{align*}
We have used the changes of variables $y = U x$ where $U$ is the orthogonal
matrix diagonalizing $\Sigma$, and later $y_i = \sqrt{\lambda_i}w_i$ for
$i=1,2$.

Continuing, we have:
\begin{align*}
-\E&(\log^-|\eta| \chi_{E\cap \{|\eta|<p\}}) \geq  -\int_0^p \frac
{1-e^{-s^2/2\lambda_1}}{s} ds -
\int_p^1 \frac {1-e^{-p^2/2\lambda_1}} s ds \notag\\
& = \frac 1 2 \int_0^{p^2/2\lambda_1} e^{-t}\log(2\lambda_1t)dt \notag \\
& \ge \frac 1 2 \int_0^{p^2/2\lambda_1} \log(t) dt +\frac
{p^2}{4\lambda_1} \log (2\lambda_1)
= \frac {p^2}{2\lambda_1}(\log p -\frac 1 2 )
\end{align*}

Therefore our lower bound is
 \begin{align*}
 \E (\chi_E\log|\eta|) &\ge \frac {p^2}{2\lambda_1}(\log p -\frac 1 2) - p\log p
\geq -\frac {p^2}{4\lambda_1} + (1- \frac 1 {2\lambda_1}) p \log p
\end{align*}

Combining the two bounds we get the desired result.
\end{proof}

We now turn to the proof of proposition \ref{prop offT}. Take $\phi(z) =
\phi_T(z)$ a real $C^2$ function, whose support is $[-\frac 1 2,T+\frac 1
2]\times[a^\prime,b^\prime]$ with $-\Delta<a^\prime<a<b<b^\prime<\Delta$, and
which takes the value $1$ on $[0,T]\times[a,b)$. We may build such
$\phi_T(z)$ that will obey also the bound $\norm{\Delta\phi}_{L^1} <
10(T+b-a)$. Assume $\alpha$ and $\beta$ are fixed for now, and fix also
$s>0$. We are interested in dominating the probability of the event $A_T
=\{N_T > sT \} $. Write $p = p_T = \Pro(A_T)$.

We have
\[
N_T < \frac 1 {2\pi}\int \Delta\phi_T(z) \log|f(z)| dm_2(z)\,,
\]

and therefore,
\begin{align*}
sT \cdot p &\leq \E_g (\chi_{A_T} N_T) \leq \E_g \left( \chi_{A_T}
\frac 1 {2\pi}\int \Delta\phi(z) \log|f(z)| dm_2(z)  \right) \\
&= \frac 1 {2\pi} \int \Delta\phi \: \E_g \left(\chi_{A_T} \log|f(z)| \right) dm_2(z) \\
&\leq \frac {1}{2\pi} \norm{\Delta \phi}_{L^1} \sup_{z\in D} \E_g
\left(\chi_{A_T} \log|f(z)| \right)
\end{align*}

Before we continue, let us justify the exchange of expectation and integral.
Recall $f(z) = g(z) + \eta_{\alpha,\beta}(z)$; so in order to use Fubini's
theorem we need
\begin{equation}\label{eq fubini log}
\int_D \E_g |\Delta\phi(z) \cdot \log |g(z)+\eta_{\alpha,\beta}(z)|
| dm_2(z)  = \int_D |\Delta\phi(z) |\, \E_g|\log
|g(z)+\eta_{\alpha,\beta}(z)|\, | <\infty
\end{equation}

For each $z\in D$, $f(z) = g(z)+\eta_{\alpha,\beta}(z)$ is a $2$ dimensional
Gaussian random variable, with mean $\mu(z)=\eta_{\alpha,\beta}(z)$, and the
same covariance matrix $\Sigma(z)$ as the $2$ dimensional Gaussian r.v.
$g(z)$. By lemma \ref{eigenv}, we see that both $\mu(z)$ and $\Sigma(z)$
depend continuously on the paremeter $z$. So, the function $\E_g|\log
|g(z)+\eta_{\alpha,\beta}(z)|$ is bounded above for $z\in
\text{support}(\phi)$, which ends this argument.

Notice further, that in our stationary case $\lambda_1(z)$, $\lambda_2(z)$,
the eigenvalues of $\Sigma(z)$, depend on $y$ only, where $z=x+iy$. Therefore
they have lower and upper bounds on $\R \times [a^\prime,b^\prime]$. Notice
that also $\mu(z)$, being a trigonometric function, has such bounds. By
applying lemma \ref{lem log bound} with $\eta(z) =
g(z)+\eta_{\alpha,\beta}(z)$, we get:

$$ \sup_{z\in \R\times[a^\prime,b^\prime]} \E (\chi_{A_T} \log|g(z)+\zeta|) < p(c_1 -c_2 \log p).$$
where $c_1,c_2$ are positive constants ($c_1$ depending on $\alpha,\beta$,
the horizontal lines $a,b$, and the kernel of $g$). Putting all this
together, we get:
$$sT\cdot p \leq \frac 5 \pi (T+b-a) p (c_1 - c_2 \log p ) ,$$
which leads to the exponential bound we strived for:
$$\exists c,C>0 \:\text{such that } p_T = \Pro_g(N_T > T\,s) \leq C e^{-c s }, \: \forall T\geq 1\,. $$
\hfill $\Box$

\end{document}